
\documentclass[journal]{IEEEtran}
%

\usepackage{amssymb,amsthm,bm,footnote}

\newtheorem{theorem}{Theorem}
\newtheorem{lemma}[theorem]{Lemma}

\newcommand{\overbar}[1]{\mkern 1.5mu\overline{\mkern-1.5mu#1\mkern-1.5mu}\mkern 1.5mu}

\DeclareMathAlphabet{\mathpzc}{OT1}{pzc}{m}{it}

\newcommand{\Z}{G}
\newcommand{\g}{u}
\newcommand{\X}{\mathbf X}
\newcommand{\M}{\mathbf M}

\newcommand{\Sm}{\mathbf S}

\newcommand{\bmzeta}{\bm{\zeta}}
\newcommand{\bmzetax}{\mathbf x}
\newcommand{\zetax}{x}
\newcommand{\f}{\mathbf f}
\newcommand{\z}{\mathbf z}
\newcommand{\br}{\mathbf r}
\newcommand{\ba}{\mathbf a}
\newcommand{\A}{\mathbf A}
\newcommand{\T}{\mathbf T}

\newcommand{\bv}{\mathbf v}
\newcommand{\W}{\mathbf W}
\newcommand{\bb}{\mathbf b}
\newcommand{\C}{\mathbf C}
\newcommand{\D}{\mathbf D}
\newcommand{\bLambda}{\mathbf \Lambda}
\newcommand{\bP}{\mathbf P}
\newcommand{\bzero}{\mathbf 0}
\newcommand{\J}{\mathbf J}
\newcommand{\Rsub}{{\mathbf R}_{sub}}
\newcommand{\newX}{\mathbf Y}
\newcommand{\newM}{\mathbf A}
\newcommand{\newmvec}{\mathbf a}
\newcommand{\newSm}{\mathbf B}
\newcommand{\newsvec}{\mathbf b}
\newcommand{\bvICA}{\mathbf x}
\newcommand{\buICA}{\mathbf s}
\newcommand{\AICA}{\mathbf M}
\newcommand{\baICA}{\mathbf m}
\newcommand{\pointba}{\mathbf p}

\usepackage{cite}

%
\ifCLASSINFOpdf
   \usepackage[pdftex]{graphicx}
   \DeclareGraphicsExtensions{.pdf,.jpeg,.png}
\else
   \usepackage[dvips]{graphicx}
   \graphicspath{{../eps/}}
   \DeclareGraphicsExtensions{.eps}
\fi

\usepackage[cmex10]{amsmath}
\interdisplaylinepenalty=2500

\hyphenation{column-wise}

\begin{document}
\title{Generic uniqueness of a structured matrix factorization and applications in blind source separation}

\author{Ignat~Domanov and
        Lieven~De~Lathauwer,~\IEEEmembership{Fellow,~IEEE}
\thanks{This work was supported by Research Council KU Leuven: C1 project  c16/15/059-nD, CoE PFV/10/002 (OPTEC) and PDM postdoc grant, by F.W.O.:  project  G.0830.14N, G.0881.14N, by the Belgian Federal Science Policy Office: IUAP P7 (DYSCO II,  Dynamical systems, control
and optimization,  2012-2017), by EU: The research leading to these results has received funding from the European Research Council under the European Union's Seventh Framework Programme (FP7/2007-2013) / ERC Advanced Grant: BIOTENSORS (no.  339804). This paper reflects only the authors' views and the Union is not liable for any use that may be made of the contained information.}
\thanks{The authors are with Group Science, Engineering and Technology, KU Leuven-Kulak, E. Sabbelaan 53, 8500 Kortrijk, Belgium. Lieven~De~Lathauwer is also with Dept. of Electrical Engineering  ESAT/STADIUS KU Leuven,
 Kasteelpark Arenberg 10, bus 2446, B-3001 Leuven-Heverlee, Belgium
 (e-mail: Ignat.Domanov@kuleuven-kulak.be; Lieven.DeLathauwer@kuleuven-kulak.be).}
}


\maketitle

\begin{abstract}
Algebraic geometry, although little explored in signal processing,  provides tools that are very convenient for investigating generic properties in a wide range of applications. Generic properties are properties that hold ``almost everywhere''. We present a set of conditions that are sufficient for demonstrating the generic uniqueness of a certain structured matrix factorization. This set of conditions may be used as a checklist for generic uniqueness in different settings. We discuss two particular applications in detail. We provide a relaxed generic uniqueness condition for joint matrix diagonalization that is relevant for independent component analysis in the underdetermined case. We present generic uniqueness conditions for a recently proposed class of deterministic blind source separation methods that rely on mild source models. For the interested reader we provide some intuition on how the results are connected to their algebraic geometric roots. 
\end{abstract}

\begin{IEEEkeywords}
structured matrix factorization, structured rank decomposition, blind source separation,
direction of arrival, uniqueness, algebraic geometry
\end{IEEEkeywords}

\IEEEpeerreviewmaketitle

\section{Introduction}
\subsection{Blind source separation and uniqueness}
The matrix factorization $\X=\M\Sm^T$ is well known in the blind source separation (BSS) context:
the rows of $\Sm^T$ and $\X$ represent unknown source signals and their observed linear mixtures, respectively.
The task of the BSS problem is to estimate the source matrix $\Sm$ and the  mixing matrix $\M$ from $\X$.
If no prior information is available on the matrices $\M$ or $\Sm$, then they cannot be uniquely identified  from
$\X$. Indeed, for any nonsingular matrix $\T$,
\begin{equation}\label{eq:first_eq}
\X=\M\Sm^T=(\M\T)(\Sm\T^{-T})^T=\overbar{\M}\overbar{\Sm}^T.
\end{equation}
 Applications may involve particular constraints on $\M$ and/or $\Sm$, so that in the resulting class of structured matrices the solution of \eqref{eq:first_eq} becomes unique. 
 Commonly used constraints include sparsity \cite{BDE}, constant modulus \cite{ConstModul} and Vandermonde structure  \cite{ESPIRIT1989}.

Sufficient conditions for uniqueness can be deterministic or generic. Deterministic conditions concern particular matrices $\M$ and $\Sm$. Generic conditions concern the situation that can be expected in general; a generic property is a property that holds everywhere except for a set of measure $0$. (A formal definition will be given in Subsection \ref{subsectio_ID} below.)

To illustrate the meaning of deterministic and generic uniqueness let us consider  decomposition \eqref{eq:first_eq} in which
$\X\in \mathbb{C}^{K\times N}$, $\M\in \mathbb{C}^{K\times R}$ and the columns of  $\Sm\in\mathbb C^{N\times R}$
are obtained by sampling the exponential signals $z_1^{t-1},\dots,z_R^{t-1}$ at $t=1,\dots,N$. Then $(\Sm)_{nr}=(z_r^{n-1})$, i.e. $\Sm$ is a Vandermonde matrix.
A deterministic condition under which decomposition \eqref{eq:first_eq} is unique (up to trivial indeterminacies) is \cite{ESPIRIT1989}: (i) the Vandermonde matrix $\Sm$ has strictly more rows than columns and its generators $z_j$ are distinct and (ii) the matrix $\M$ has full column rank. (In this paper we say that 
an $K\times R$ matrix {\em has full column rank} if its column rank is $R$, which implies
 $K\geq R$.) This deterministic condition can easily be verified for any particular $\M$ and $\Sm$. A generic variant  is: (i) the Vandermonde matrix $\Sm$ has $N > R$ and (ii) the (unstructured) matrix $\M$ has $K \geq R$. Indeed, under these dimensionality conditions the deterministic conditions are satisfied everywhere, except in a set of measure $0$ (which contains the particular cases of coinciding generators $z_r$ and the cases in which the columns of $\M$ are not linearly independent despite the fact that $\M$ is square or even tall).
 Note that generic properties do not allow one to make statements about specific matrices; they only show the general picture.

As mentioned before, BSS has many variants, which differ in the types of constraints that are imposed. Different constraints usually mean different deterministic uniqueness  conditions, and the derivation of these is work that is difficult to automate. In this paper we focus on generic uniqueness conditions. We propose a framework with which generic uniqueness can be investigated in a broad range of cases. Indeed, it will become clear that if we limit ourselves to generic uniqueness,
the derivation of conditions can to some extent be automated. We discuss two concrete applications which may serve as examples.

Our approach builds on results in algebraic geometry. Algebraic geometry has so far been used in system theory in \cite{Hermann1977,Hermann1977PartII} and it also has direct applications in tensor-based BSS via the generic uniqueness of tensor decompositions \cite{LievenCichocki2013,Comon2014,AlgGeom1}. Our paper makes a contribution in further connecting algebraic geometry with applications in signal processing.
\subsection{Notation} 
Throughout the paper $\mathbb F$ denotes the field of real or complex numbers;
bold lowercase letters denote vectors, while bold uppercase letters represent matrices; 
a column of a matrix $\A$ and an entry of a vector $\bb$ are denoted by $\ba_j$ 
and $b_j$, respectively; the superscripts $\cdot^*$, $\cdot^T$ and  $\cdot^H$ are used for the conjugate, transpose, and Hermitian transpose, respectively; ``$\otimes$'' denotes the Kronecker product.

\subsection{Statement of the problem and organization of the paper}\label{subsectionIC}
{\em A structured matrix factorization.} 
In this paper we consider the following structured factorization  of a $K\times N$ matrix $\newX$, 
\begin{equation}\label{eq:sumstrucrank1}
\newX =
\newM(\z)\newSm(\z)^T,
\quad \z\in\Omega
\end{equation}
where $\Omega$ is a subset of $\mathbb F^n$ and $\newM(\z)$ and $\newSm(\z)$  are known matrix-valued functions defined on $\Omega$.

 W.l.o.g. we can assume that the parameter vector $\z=[z_1\ \dots\ z_n]^T$ is ordered such that $\newSm(\z)$ depends on the last $s\leq n$ entries, while
 $\newM(\z)$ depends on $m\geq 0$ entries that are not necessarily the first or the last. That is,  
\begin{equation*} 
\newM(\z)=\newM(z_{i_1},\dots,z_{i_m}),\ \ 
\newSm(\z)=\newSm(z_{n-s+1},\dots,z_n)
\end{equation*} 
for some $1\leq i_1<i_2<\dots<i_m\leq n$.
In general, the entries used to parameterize $\newM$ and $\newSm$ are allowed to overlap so that $m+s\geq n$.
The case where $\newM$ and $\newSm$ depend on separated parameter sets corresponds to  $m+s=n$; in this case
$\newM$ depends strictly on the first $m$ of the entries of $\z$. 

Our study is limited to $K\times R$ matrices $\newM(\z)$ that generically have full column rank. We do not make any other assumptions on the form of $\newM(\z)$. In particular, we do not impose restrictions on how the entries depend on $\z$. We are however more explicit about the form of the $N\times R$ matrix $\newSm(\z)$.
We assume that each of its columns $\newsvec_r(\z)$ is generated by $l$ parameters that are
independent of the parameters used to generate the other columns, i.e., $\newSm(\z)=[\newsvec_1(\bmzeta_1)\ \dots\ \newsvec_R(\bmzeta_R)]$ with $\bmzeta_1,\dots,\bmzeta_R\in\mathbb F^l$. Note that the independence implies that $s=Rl$ and that 
$[\bmzeta_1^T\ \dots\ \bmzeta_R^T]^T$  and $[z_{n-s+1}\ \dots\ z_n]^T$ are the same up to index permutation.

For the sake of exposition, let us first  consider a class of matrices $\newSm(\z)$ that is smaller  than the class that we will be able to handle in our derivation of generic uniqueness conditions. Namely, let us first consider matrices
$\newSm^{rat}(\z)$, of which the $n$-th row is obtained by evaluating a known rational function $\frac{p_n(\cdot)}{q_n(\cdot)}$
at some points $\bmzeta_1,\dots,\bmzeta_R \in \mathbb F^l$, $1 \leq n \leq N$:
\begin{equation*}
 \newSm^{rat}(\z)
  =
 \left[\begin{matrix}
 \frac{p_1(\bmzeta_1)}{q_1(\bmzeta_1)}&\dots & \frac{p_1(\bmzeta_R)}{q_1(\bmzeta_R)}\\
 \vdots&\vdots&\vdots\\
 \frac{p_N(\bmzeta_1)}{q_N(\bmzeta_1)}&\dots & \frac{p_N(\bmzeta_R)}{q_N(\bmzeta_R)}
 \end{matrix}\right], 
  \end{equation*}
  where
  $$
   p_1,\dots,p_N,\ q_1,\dots,q_N\ \text{ are polynomials in  }l \text{ variables.}
   $$
 Note that we model  a column of $\newSm^{rat}$ through the values taken by $N$ functions $\frac{p_1(\cdot)}{q_1(\cdot)},\dots,\frac{p_N(\cdot)}{q_N(\cdot)}$ at one particular point $\bmzeta_r$. On the other hand, a row of $\newSm^{rat}$ is modeled as values taken by one particular function $\frac{p_n(\cdot)}{q_n(\cdot)}$
    at $R$ points $\bmzeta_1,\dots,\bmzeta_R$.
    
 The structure that we consider in our study for the $N\times R$ matrix $\newSm(\z)$ is more general than the rational structure of $\newSm^{rat}(\z)$ in the sense that we additionally allow (possibly nonlinear) transformations of $\bmzeta_1,\dots,\bmzeta_R$. Formally, we assume that the columns of $\newSm(\z)$ are sampled values of known vector functions of the form
\begin{equation}\label{eq:column_s}
 \newsvec(\bmzeta) = \left[\frac{p_1(\f(\bmzeta))}{q_1(\f(\bmzeta))}\ \dots\ \frac{p_N(\f(\bmzeta))}{q_N(\f(\bmzeta))}\right]^T,\quad
  \bmzeta\in\mathbb F^l,
 \end{equation}
at  points $\bmzeta_1,\dots,\bmzeta_R\in\mathbb F^l$, such that
\begin{align*}
 \newSm(\z)&=[\newsvec(\bmzeta_1) \dots \newsvec(\bmzeta_R)]=
 \left[\begin{matrix}
 \frac{p_1(\f(\bmzeta_1))}{q_1(\f(\bmzeta_1))}&\dots & \frac{p_1(\f(\bmzeta_R))}{q_1(\f(\bmzeta_R))}\\
 \vdots&\vdots&\vdots\\
 \frac{p_N(\f(\bmzeta_1))}{q_N(\f(\bmzeta_1))}&\dots & \frac{p_N(\f(\bmzeta_R))}{q_N(\f(\bmzeta_R))}
 \end{matrix}\right], 
  \end{align*}
	 where
     \begin{equation*} 
      \begin{split}
      &\f(\bmzeta)=(f_1(\bmzeta),\dots,f_l(\bmzeta)) \in\mathbb F^l,\\
      & f_1,\dots,f_l \text{ are scalar functions of }l\text{ variables.}
      \end{split}
      \end{equation*}
  The functions $f_1,\dots,f_l$ are subject to an analyticity assumption that will be specified in Theorem \ref{th:main} further. Although our general result in Theorem \ref{th:main} will be formulated in terms of functions $f_1,\dots,f_l$  in $l$ variables, in the applications in Sections \ref{section_ICA}--\ref{sampled sources} we will only need entry-wise transformations:
     \begin{equation}\label{eq:single_var}
     \f({\bmzeta})=\f(\zeta_1,\dots,\zeta_l)=(f_1(\zeta_1),\dots,f_l(\zeta_l))
     \end{equation}
with $f_1,\dots,f_l$ analytic functions in one variable.

  As an example of how the model for $\newSm(\z)$ can be used,
   consider $R$ 
  vectors that  are obtained 
    by sampling the  exponential signals $e^{i \zeta_1 (t-1)},\dots, e^{i \zeta_R (t-1)}$ (with $\zeta_1,\dots,\zeta_R\in\mathbb R$) at $t=1,\dots,N$. In this case 
     $\newSm(\z)$ is an $N\times R$ Vandermonde matrix with unit norm generators; its  $r$th column is $\newsvec(\zeta_r)=[1\ e^{i \zeta_r}\ \dots\ e^{i \zeta_r (N-1)}]^T$.
        We have $e^{i \zeta_r (n-1)}=\frac{p_n(f(\zeta_r))}{q_n(f(\zeta_r))}$, where
      $f(\zeta)=e^{i \zeta}$, $p_n(\zetax)=\zetax^{n-1}$, and $q_n(\zetax)=1$ for $\zeta\in \mathbb R$
       and $\zetax\in \mathbb C$. 
      
{\em Generic uniqueness of the decomposition.}\label{subsectio_ID}
We interpret factorization \eqref{eq:sumstrucrank1}  as  a decomposition into a sum of structured rank-$1$ matrices
 \begin{equation}\label{eq:sumstrucrank1_structured}
 \begin{split}
 \newX =\newM(\z)\newSm(\z)^T= 
 \sum\limits_{r=1}^R\newmvec_r(\z)\newsvec(\bmzeta_r)^T,\ \z\in\Omega, 
 \end{split}
 \end{equation}
where $\newmvec_r(\z)$ denotes the $r$th column of $\newM(\z)$. 
It is clear that in \eqref{eq:sumstrucrank1_structured} the rank-1 terms can be arbitrarily permuted.
We say that {\em decomposition \eqref{eq:sumstrucrank1_structured} is unique} when it is only subject to this trivial indeterminacy.
We say that  decomposition \eqref{eq:sumstrucrank1_structured} is  {\em generically unique} if
it is unique for a generic choice of $\z\in\Omega$, that is 
\begin{equation}\label{eq:def_gen_uniq}
\mu_n\{\z\in\Omega:\ \text{decomposition \eqref{eq:sumstrucrank1_structured} is not unique}\}=0,
\end{equation}
where $\mu_n$ is a measure that is absolutely continuous (a.c.) with respect to the Lebesgue measure on $\mathbb F^n$.

In this paper we  present conditions on the polynomials $p_1,\dots,p_N$, $q_1,\dots,q_N$, the function $\f$  and the set $\Omega$ which guarantee that decomposition \eqref{eq:sumstrucrank1_structured} is generically unique. As a technical assumption, since in the case where $\mu_n(\Omega)=0$ condition \eqref{eq:def_gen_uniq} cannot be used to infer generic uniqueness from a subset of $\Omega$,  we assume that $\mu_n(\Omega)>0$.

{\em Organization and results.} In Section \ref{subsection_2_B} we state the main result of this paper in general terms, namely, Theorem \ref{th:main} presents conditions that guarantee that the structured decomposition \eqref{eq:sumstrucrank1_structured} is generically unique. The proof of Theorem \ref{th:main} is given in Appendix \ref{Appendix_A}. Besides the technical derivation, Appendix \ref{Appendix_A} provides some intuition behind the high-level reasoning and makes the connection with the trisecant lemma in algebraic geometry, for readers who are interested. In Sections \ref{section_ICA}--\ref{sampled sources} we use Theorem \ref{th:main} to obtain new uniqueness results in the context of two different applications. This is done by first expressing the specific BSS problem as a decomposition of the form \eqref{eq:sumstrucrank1_structured}, for which the list of conditions in Theorem \ref{th:main} is checked. Section \ref{section_ICA} concerns an application in independent component analysis. More precisely, it concerns joint matrix diagonalization in the underdetermined case (more sources than observations) and presents a new, relaxed bound on the number of sources under which the solution of this basic subproblem is generically unique. This bound is a simple expression in the number of matrices and their dimension. 
 Section \ref{sampled sources} presents generic uniqueness results for a recently introduced class of deterministic blind source separation algorithms that may be seen as a variant of sparse component analysis which makes use of a non-discrete dictionary of basis functions. Appendix \ref{Appendix_B} contains the short proof of a technical lemma in Section \ref{sampled sources}. The paper is concluded in Section \ref{sec:conclusion}.
\section{Main result}\label{subsection_2_B}
The following theorem is our main result on  generic uniqueness of decomposition \eqref{eq:sumstrucrank1_structured}.
It states that, generically, 
 the $R$ structured rank-$1$ terms of the $K\times N$ matrix $\newX$ can be uniquely recovered if $K\geq R$ and
  $R\leq\widehat{N}-\widehat{l}$. Here,
$\widehat{N}\leq N$ is a lower bound on the dimension of the linear vector space  
$\operatorname{span}\{\br(\bmzetax):\ q_1(\bmzetax)\cdots q_N(\bmzetax)\ne 0,\ \bmzetax\in\mathbb F^l\}$
generated by vectors of the form
\begin{equation}\label{eq:column_r}
 \br(\bmzetax) = \left[\frac{p_1(\bmzetax)}{q_1(\bmzetax)}\ \dots\ \frac{p_N(\bmzetax)}{q_N(\bmzetax)}\right]^T.
 \end{equation}
 (Note that the definition of $\br(\bmzetax)$ does not involve a nonlinear transformation $\f$, even when such a nonlinear transformation is used for modelling $\newsvec(\bmzeta)$.) On the other hand, 
  the value $\widehat{l}\leq l$ is an upper bound on the number of ``free parameters'' actually needed to parameterize  a generic vector of the form \eqref{eq:column_r}. (Indeed, although $\br(\bmzetax)$ is generated by $l$ independent parameters, it may be possible to do it with less in particular cases. For instance, let $N=3$, 
  $q_1(\bmzetax)=q_2(\bmzetax)=q_3(\bmzetax)=1$ and $p_1(\bmzetax)=\zetax_1+\zetax_3$, 
  $p_2(\bmzetax)=\zetax_2-\zetax_3$, $p_3(\bmzetax)=\zetax_1+\zetax_2$, so that
  $\br(\bmzetax)=\W\bmzetax$ with 
                                 $\W=\begin{scriptsize}
                                 \left[\begin{array}{rrr}
                                                                  1 & 0 & 1\\
                                                                  0 & 1 & -1\\
                                                                  1 & 1 & 0\end{array}\right]
                                 \end{scriptsize}
                                 $.
  Since $\operatorname{rank}(\W)=2$, $\br(\bmzetax)$ can be parameterized by $2<3$ independent parameters.)
  
  In the theorem and throughout the paper we use  $\J(\br,\bmzetax)\in\mathbb F^{N\times l}$ and 
  $\J(\f,\bmzeta)\in\mathbb F^{l\times l}$ to denote the Jacobian matrices of $\br$ and $\f$, respectively,
  $$
  \left(\J(\br,\bmzetax)\right)_{ij}=\frac{\partial\frac{p_i}{q_i}}{\partial\zetax_j},\quad
  \left(\J(\f,\bmzeta)\right)_{ij}=\frac{\partial f_i}{\partial\zeta_j}.
  $$
  Further, 
  $$
  \operatorname{Range}(\br)=\{\br(\bmzetax):\ q_1(\bmzetax)\cdots q_N(\bmzetax)\ne 0,\ \bmzetax\in\mathbb C^l\}\subset\mathbb C^N
  $$
  denotes the set of all values of $\br(\bmzetax)$ for $\bmzetax\in\mathbb C^l$. We say that the set $\operatorname{Range}(\br)$
  is invariant under scaling if
  $$
  \operatorname{Range}(\br)\supseteq \lambda\cdot\operatorname{Range}(\br)\ \text{for all}\ \lambda\in\mathbb C.
  $$ 
  \begin{theorem} 
\label{th:main}
 Let $\Omega$ be a subset of $\mathbb F^n$ and $\mu_n(\Omega)>0$.
Assume that
\begin{enumerate}[\IEEEsetlabelwidth{Z}]
\item the matrix $\newM(\z)$ has full column rank
 for  a generic choice of $\z\in\Omega$, that is,
 \begin{equation*}
 \mu_n\{\z\in\Omega:\ \operatorname{rank}\newM(\z)<R\}=0;
 \end{equation*}
\item 
the coordinate functions $f_1,\dots,f_l$ of $\f$  can be represented as
$$
f_1(\bmzeta)= \frac{f_{1,num}(\bmzeta)}{ f_{1,den}(\bmzeta)},\dots,
f_l(\bmzeta)= \frac{ f_{l,num}(\bmzeta)}{ f_{l,den}(\bmzeta)},
$$
 where the functions
\begin{equation*} 
f_{1,num}(\bmzeta),f_{1,den}(\bmzeta),\dots,f_{l,num}(\bmzeta),f_{l,den}(\bmzeta)
\end{equation*}
are analytic on $\mathbb C^l$;  
\item there exists $\bmzeta^0\in \mathbb C^l$ such that $\det\J(\f,\bmzeta^0)\ne 0$; 
\item the dimension of the subspace spanned by the vectors  of  form \eqref{eq:column_r} is at least 
$\widehat{N}$,
$$
\dim\operatorname{span}\{\br(\bmzetax):\ q_1(\bmzetax)\cdots q_N(\bmzetax)\ne 0,\ \bmzetax\in\mathbb C^l\}\geq \widehat{N};
$$
\item $\operatorname{rank}\J(\br,\bmzetax)\leq\widehat{l}$ for a generic choice of $\bmzetax\in\mathbb C^l$;
\item $R\leq\widehat{N}-\widehat{l}$ or $R\leq\widehat{N}-\widehat{l}-1$,
depending on whether the set $\operatorname{Range}(\br)$ is invariant under scaling or not.
 \end{enumerate}
Then decomposition \eqref{eq:sumstrucrank1_structured} is generically unique.
\end{theorem}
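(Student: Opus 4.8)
The plan is to recast the statement in projective algebraic geometry and reduce generic uniqueness to a trisecant-type dimension count.

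\emph{Reductions.} First I would pass to $\mathbb F=\mathbb C$ and to genericity over all of $\mathbb C^{n}$. The idea is to show that uniqueness of \eqref{eq:sumstrucrank1_structured} holds for every $\z$ outside a finite union of zero sets of analytic functions that are not identically zero on $\mathbb C^{n}$: indeed $\newM$, $\newsvec$ and the auxiliary objects below are built from the polynomials $p_i,q_i$ and, through $\f$, from the ratios $f_{i,num}/f_{i,den}$ of entire functions (Assumption~2), so every ``defect locus'' has this form. Such a set has Lebesgue measure $0$ in $\mathbb C^{n}$, hence (since $\mu_n$ is a.c.\ with respect to Lebesgue) $\mu_n$-measure $0$ in $\Omega$; its trace on $\mathbb R^{n}$ is the zero set of a not identically zero real-analytic function, hence Lebesgue-null in $\mathbb R^{n}$ and $\mu_n$-null in $\Omega$ in the real case as well. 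Since a real structured decomposition is in particular a complex one, real non-uniqueness implies complex non-uniqueness, so it suffices to treat $\mathbb F=\mathbb C$.

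\emph{Geometric setup.} Let $\mathcal S=\overline{\{\br(\bmzetax):q_1(\bmzetax)\cdots q_N(\bmzetax)\neq 0\}}\subset\mathbb C^{N}$ be the Zariski closure; it is irreducible. By Assumption~3 the holomorphic map $\f$ is biholomorphic near $\bmzeta^{0}$, so its image contains a nonempty open set, and since the components of $\br$ are rational this forces $\overline{\{\newsvec(\bmzeta)\}}=\mathcal S$ and $\operatorname{span}\{\newsvec(\bmzeta)\}=\operatorname{span}\{\br(\bmzetax)\}=:W$, with $\dim W\geq\widehat N$ by Assumption~4. Put $\mathcal X=\overline{\{[\newsvec(\bmzeta)]\}}\subseteq\mathbb P(W)=\mathbb P^{M}$, $M=\dim W-1\geq\widehat N-1$; this is an irreducible variety, nondegenerate in $\mathbb P^{M}$. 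Its dimension equals the generic rank of the Jacobian of $\bmzeta\mapsto\newsvec(\bmzeta)$ when $\mathcal S$ is not a cone, and one less when it is, i.e.\ when $\operatorname{Range}(\br)$ is invariant under scaling; by the chain rule and Assumption~3 that generic rank equals $\operatorname{rank}\J(\br,\cdot)\leq\widehat l$ (Assumption~5). Combining with the two alternatives of Assumption~6 gives, in both cases, $R\leq M-\dim\mathcal X$. Finally, for generic $\z$ one has $\operatorname{rank}\newX=R$, because $\newM(\z)$ then has full column rank (Assumption~1) and a generic choice of $\bmzeta_1,\dots,\bmzeta_R$ makes $\newsvec(\bmzeta_1),\dots,\newsvec(\bmzeta_R)$ linearly independent (using $R\leq\dim W$ and nondegeneracy of $\mathcal X$). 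Thus the row space of $\newX$ equals $V:=\operatorname{span}\{\newsvec(\bmzeta_r)\}$, and $\mathbb P(V)$ is an $(R-1)$-plane passing through the $R$ points $[\newsvec(\bmzeta_1)],\dots,[\newsvec(\bmzeta_R)]\in\mathcal X$.

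\emph{Core argument.} Any competing decomposition $\newX=\sum_{r=1}^{R}\newmvec'_r\,\newsvec(\bmzeta'_r)^{T}$ has $\operatorname{rank}\newX=R$, so both $\{\newmvec'_r\}$ and $\{\newsvec(\bmzeta'_r)\}$ are linearly independent; hence $\operatorname{span}\{\newsvec(\bmzeta'_r)\}=\operatorname{rowspace}(\newX)=V$, and the $R$ distinct projective points $[\newsvec(\bmzeta'_r)]$ all lie in $\mathbb P(V)\cap\mathcal X$. It therefore suffices to prove that, for generic $\bmzeta_1,\dots,\bmzeta_R$, the intersection $\mathbb P(V)\cap\mathcal X$ consists of exactly the $R$ points $[\newsvec(\bmzeta_1)],\dots,[\newsvec(\bmzeta_R)]$. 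Granting this, $\{[\newsvec(\bmzeta'_r)]\}=\{[\newsvec(\bmzeta_r)]\}$, so $\newsvec(\bmzeta'_r)=\lambda_r\,\newsvec(\bmzeta_{\pi(r)})$ for some permutation $\pi$ and scalars $\lambda_r\neq 0$; matching the coefficients of the two expressions for $\newX$ against the linearly independent family $\{\newsvec(\bmzeta_r)\}$ then yields $\newmvec_{\pi(r)}=\lambda_r\newmvec'_r$, i.e.\ the two decompositions have the same rank-$1$ terms up to permutation, which is exactly generic uniqueness.

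\emph{The main obstacle.} What remains is the statement that a general $(R-1)$-secant $(R-1)$-plane of an irreducible nondegenerate variety $\mathcal X\subseteq\mathbb P^{M}$ with $\dim\mathcal X\leq M-R$ meets $\mathcal X$ in no further point --- the (generalized) trisecant lemma, which is the connection with algebraic geometry advertised in the introduction and the place where the real work lies. I would prove it by a dimension count on the incidence variety $\{(p_1,\dots,p_R,q)\in\mathcal X^{R+1}:q\in\langle p_1,\dots,p_R\rangle,\ q\notin\{p_1,\dots,p_R\}\}$, showing that its projection onto the first $R$ factors is not dominant; the delicate points are to control configurations in which the $p_i$ fail to span an $(R-1)$-plane and to exploit nondegeneracy of $\mathcal X$ together with $\dim\mathcal X+(R-1)<M$ to force a general $(R-1)$-plane through the prescribed points to avoid $\mathcal X$ elsewhere. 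Transporting this Zariski-open condition back along the dominant map $\bmzeta\mapsto[\newsvec(\bmzeta)]$ and then through the reductions of the first paragraph finishes the proof.
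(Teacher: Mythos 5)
Your proposal is correct and follows essentially the same route as the paper: reduce to a deterministic criterion (the span of the $\newsvec(\bmzeta_r)$ meets the structured set only in the trivial points), pass to the Zariski closure of $\operatorname{Range}(\br)$, bound its span and dimension via assumptions 4--5, invoke the generalized trisecant lemma, and transfer back through $\f$ using assumptions 2--3 and the fact that zero sets of nonzero analytic functions are null. The only substantive difference is that the paper cites the trisecant lemma from the literature rather than proving it by the incidence-variety dimension count you sketch, and it handles the real case by an explicit decomposition of the exceptional set rather than your (equally valid) analytic-continuation reduction.
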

\begin{proof}
See Appendix A.
\end{proof}
Assumptions 1--6 can be used as a checklist for demonstrating the generic uniqueness of decompositions that can be put in the form \eqref{eq:sumstrucrank1}. We will discuss two application examples in Sections \ref{section_ICA}--\ref{sampled sources}.
We comment on  the following  aspects of assumptions 2--6. 

$\bullet$ 
 In this paper we will use Theorem \ref{th:main} in the case
where  $\f(\bmzeta)$ is of the form \eqref{eq:single_var}.
For such $\f$ the matrix $\J(\f,\bmzeta)$ is  diagonal, yielding that
$\det\J(\f,\bmzeta)=f_1'(\zeta_1)\cdots f_l'(\zeta_l)$. Moreover, in this paper $f_1,\dots,f_l$ are non-constant,
so $\det\J(\f,\bmzeta)$ is not identically zero. 
Thus, {\em assumption} 3 in Theorem \ref{th:main} will hold automatically.

$\bullet$ 
For the reader who wishes to apply Theorem \ref{th:main} in  cases where $\f$ is not of the form \eqref{eq:single_var}, we  recall the definition of an analytic (or holomorphic) function of several variables  used in {\em assumption 2}. 
 A function  $f:\mathbb C^l\rightarrow \mathbb C$ of $l$ complex variables is analytic \cite[page 4]{Krantz2001} if it is analytic in each variable separately, that is, 
if for each $j=1,\dots,l$ and accordingly fixed $\zeta_1,\dotsm\zeta_{j-1},\zeta_{j+1},\dots,\zeta_l$ the function
$$
z\mapsto f(\zeta_1,\dotsm\zeta_{j-1},z,\zeta_{j+1},\dots,\zeta_l)
$$
is analytic on $\mathbb C$ in the classical one-variable sense. Examples of analytic functions
of several variables can be obtained by taking compositions of multivariate polynomials and analytic functions in one variables,
e.g. $f(\zeta_1,\zeta_2)=\sin(\cos (\zeta_1\zeta_2))+\zeta_1$.

$\bullet$ To check {\em assumption 4} in  Theorem \ref{th:main} it is sufficient to present (or prove the existence of) $\widehat{N}$ linearly independent vectors $\{\br(\bmzetax_i)\}_{i=1}^{\widehat{N}}$. It is clear that larger $\widehat{N}$ yield a better bound on $R$ in {\em assumption} 6. 
In all cases considered in this paper $\widehat{N}=N$. The situation  $\widehat{N}<N$ may appear when
the $N\times 1$ vector-function $\newsvec(\bmzeta)$ models a periodic, (locally) odd or even function, etc. 

$\bullet$ The goal of {\em assumption 5} is to check whether  generic signals of the form \eqref{eq:column_r}
can be re-parameterized with  fewer (i.e. $\widehat{l}<l$) parameters. In this case, the Jacobian $\J(\br,\bmzetax)$ has indeed rank strictly less than $l$. It is clear that {\em assumption} 5 in Theorem \ref{th:main} holds trivially for $\widehat{l}=l$ and that smaller $\widehat{l}$ yield a better bound on $R$ in {\em assumption} 6.
In this paper we  set either $\widehat{l}=l$ (namely in the proof of Theorem \ref {th:exp_poly}) or, in the case where it is clear that  $\J(\br,\bmzetax)$  does not have full column rank (namely in the proof of Theorems \ref{th:ICA} and \ref{th:rat_funct}), $\widehat{l}=l-1$.

$\bullet$ 
Although the Theorem holds both for 
$\mathbb F=\mathbb C$ and $\mathbb F=\mathbb R$, we formulated {\em assumptions} 3, 4 and 5 in Theorem \ref{th:main} for $\bmzeta^0\in\mathbb C^l$ and $\bmzetax\in\mathbb C^l$.
In these assumptions $\mathbb C^l$ can also be replaced by
$\mathbb R^l$. We presented the complex variants, even for the case $\mathbb F=\mathbb R$, 
since they may be easier to verify than their real counterparts, as $\bmzeta^0$ and $\bmzetax$ are allowed to take values in a larger set. On the other hand, the analyticity on $\mathbb C^l$
 in {\em assumption} 2 is a stronger assumption than analyticity on $\mathbb R^l$ and is needed in the form it is given.

\section{An application in independent component analysis}\label{section_ICA}
We consider data described by the  model
$
\bvICA=\AICA\buICA
$,
where $\bvICA$ is the $I$-dimensional vector of observations, $\buICA$ is the $R$-dimensional unknown source vector and $\AICA$ is the $I$-by-$R$ unknown
mixing matrix. We assume that the sources are mutually uncorrelated  but individually correlated in time. 
 It is known that the spatial covariance matrices of the observations  satisfy \cite{1997SOBI}
\begin{IEEEeqnarray}{rCl}
\C_1&= \operatorname{E}(\bvICA_t\bvICA^H_{t+\tau_1})=\AICA\D_1\AICA^H=\sum\limits_{r=1}^Rd_{1r}\baICA_r\baICA_r^H,\IEEEnonumber\\
&  \vdots
\label{eq:SOBI}\\
\C_P&= \operatorname{E}(\bvICA_t\bvICA^H_{t+\tau_P})=\AICA\D_P\AICA^H=\sum\limits_{r=1}^Rd_{Pr}\baICA_r\baICA_r^H,\IEEEnonumber
\end{IEEEeqnarray}
in which $\D_p=\operatorname{E}(\buICA_t\buICA_{t+\tau_p}^H)$ is the $R$-by-$R$ diagonal matrix
with the elements of the vector $(d_{p1},\dots,d_{pR})$ on the main diagonal.
The estimation of $\AICA$ from the set $\{\C_p\}$ is known as
 Second-Order Blind Identification (SOBI) \cite{1997SOBI} or as Second-Order Blind Identification of Underdetermined Mixtures 
 (SOBIUM)\cite{2008LievenSOBIUM} depending on whether the matrix $\AICA$ has full column rank or not. Variants of this problem are discussed in, e.g., \cite{Pham_Cardoso_2001},\cite{Yeredor2002},\cite{Yeredor2000},\cite[Chapter 7]{ComoJ10}. 
It is clear that if the matrices $\AICA$ and $\D_1,\dots,\D_P$ satisfy \eqref{eq:SOBI}, then the matrices
$\overline{\AICA}=\AICA{\bLambda}\bP$ and $\overline{\D}_1=\bP^T\D_1\bP,\dots,\overline{\D}_P=\bP^T\D_P\bP$ also  satisfy \eqref{eq:SOBI} for any permutation matrix $\bP$ and diagonal unitary matrix
${\bLambda}$. 
We say that \eqref{eq:SOBI} has a unique solution  when it is only subject to this trivial indeterminacy.

Generic uniqueness of solutions of \eqref{eq:SOBI} has been studied 1) in \cite{Psycho2006} and \cite[Subsection 1.4.2]{AlgGeom1}
in the case where the superscript ``$H$'' in \eqref{eq:SOBI} is replaced by the superscript ``$T$'' 
(for quantities $\bvICA$, $\AICA$ are $\buICA$ that can be  either real valued or complex valued); 2) in
 \cite{2008LievenSOBIUM}, \cite{MikaelnewUniq3rdorder} (where $\bvICA$, $\AICA$ are $\buICA$ are complex valued).
In \cite{AlgGeom1,2008LievenSOBIUM,MikaelnewUniq3rdorder} the matrix equations in \eqref{eq:SOBI} were interpreted as 
 a so-called canonical polyadic decomposition of a (partially symmetric) tensor.  
In the following theorems we interpret  the equations in \eqref{eq:SOBI} as matrix factorization problem \eqref{eq:sumstrucrank1}. The new interpretation only relies on elementary linear algebra; it does not make use of advanced  results on tensor decompositions while it does lead to more relaxed bounds on $R$ than in   \cite{2008LievenSOBIUM},\cite{MikaelnewUniq3rdorder} for $I\geq 5$. We consider  the variants  $\tau_p\ne 0$, $1\leq p\leq P$, and $\tau_1=0$ in Theorems \ref{th:ICA} and \ref{th:ICAtau_1ne0}, respectively.
\begin{theorem}\label{th:ICA}
Assume that  $\tau_1\ne 0$ and        
\begin{equation}\label{eq:sobium_bound}
R\leq \min(2P, (I-1)^2).
\end{equation}
Then \eqref{eq:SOBI} has a unique solution for generic matrices
$\AICA$ and $\D_1,\dots,\D_P$, i.e., 
\begin{equation}\label{eq:ICA_measure}
\mu_k\{ (\operatorname{vec}(\D),\operatorname{vec}(\AICA)):\ \text{solution of } \eqref{eq:SOBI} \text{ is not unique}\}=0,
\end{equation}
where $\D$ denotes the $P\times R$ matrix with entries $d_{pr}$,  $k=IR+PR$, and $\mu_k$
is a measure that is a.c. with respect to the Lebesgue measure on $\mathbb C^k$ .
\end{theorem}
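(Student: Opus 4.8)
The plan is to recast the system \eqref{eq:SOBI} as an instance of the structured decomposition \eqref{eq:sumstrucrank1_structured} and then run down the checklist of Theorem~\ref{th:main}. For the \emph{reformulation}, note that since each $\D_p$ is diagonal we have $\C_p^{H}=\AICA\D_p^{*}\AICA^{H}=\sum_{r}\overline{d_{pr}}\,\baICA_r\baICA_r^{H}$, and because the lags $\tau_p$ are nonzero the autocorrelations $d_{pr}$ carry no reality constraint, so $\C_p$ and $\C_p^{H}$ are two genuinely distinct matrices of the form $\AICA\,(\text{diagonal})\,\AICA^{H}$. Vectorizing and using $\operatorname{vec}(\baICA_r\baICA_r^{H})=\baICA_r^{*}\otimes\baICA_r$, the $2P$ identities $\operatorname{vec}(\C_p)=\sum_r d_{pr}(\baICA_r^{*}\otimes\baICA_r)$ and $\operatorname{vec}(\C_p^{H})=\sum_r \overline{d_{pr}}(\baICA_r^{*}\otimes\baICA_r)$ stack into $\newX=\newM\newSm^{T}$, where $\newX\in\mathbb C^{2P\times I^{2}}$ collects these vectorized matrices, $\newM=[\D^{T}\ \overline{\D}^{T}]^{T}\in\mathbb C^{2P\times R}$, and $\newSm\in\mathbb C^{I^{2}\times R}$ has $r$th column $\baICA_r^{*}\otimes\baICA_r$. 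Since $\baICA_r^{*}\otimes\baICA_r=\operatorname{vec}(\baICA_r\baICA_r^{H})$ pins down $\baICA_r$ up to a unimodular scalar (given the usual source-scaling convention) and the conjugation structure of $\newM$ ties a column of $\D$ to its conjugate, uniqueness of this structured rank-$1$ decomposition is equivalent to uniqueness of the solution of \eqref{eq:SOBI} up to the trivial indeterminacy; so it suffices to prove generic uniqueness of $\newX=\sum_r\newmvec_r(\baICA_r^{*}\otimes\baICA_r)^{T}$.

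To \emph{apply Theorem~\ref{th:main}}, since $\overline{a_i}a_j$ is not holomorphic in $\baICA_r$ I would pass to the holomorphic relaxation in which $\newM$ ranges freely over $\mathbb C^{2P\times R}$ and the $r$th column of $\newSm$ is $\bmzeta_r^{(2)}\otimes\bmzeta_r^{(1)}$ with $\bmzeta_r=(\bmzeta_r^{(1)},\bmzeta_r^{(2)})\in\mathbb C^{2I}$ independent across $r$; thus $l=2I$, $\f$ is the identity, every $q_n\equiv1$, each $p_n$ is a degree-$2$ monomial, and $\Omega=\mathbb C^{2PR+2IR}$ has positive measure, so assumptions~2 and 3 hold trivially. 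Assumption~1 requires $R\le 2P$ (for a free $2P\times R$ matrix the rank-deficient locus is a proper algebraic subset). For assumption~4 the tensors $\bmzeta^{(2)}\otimes\bmzeta^{(1)}$ span all of $\mathbb C^{I^{2}}$, so $\widehat N=N=I^{2}$; for assumption~5 the Segre map $(\bmzeta^{(1)},\bmzeta^{(2)})\mapsto\bmzeta^{(2)}\otimes\bmzeta^{(1)}$ has the one-dimensional reciprocal-scaling kernel, hence its Jacobian has rank $2I-1$ generically and one may take $\widehat l=2I-1$. Finally $\operatorname{Range}(\br)$, the set of rank-$\le1$ matrices, is invariant under scaling, so assumption~6 becomes $R\le\widehat N-\widehat l=I^{2}-(2I-1)=(I-1)^{2}$. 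Hence, for $R\le\min(2P,(I-1)^{2})$, Theorem~\ref{th:main} gives generic uniqueness of the relaxed decomposition on $\mathbb C^{2PR+2IR}$.

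The remaining step --- \emph{transferring} this back to \eqref{eq:SOBI} --- is the main obstacle. The admissible $(\AICA,\D)$ form the subset $S\subset\mathbb C^{2PR+2IR}$ cut out by $\newM=[\D^{T}\ \overline{\D}^{T}]^{T}$ and $\bmzeta_r^{(2)}=\overline{\bmzeta_r^{(1)}}$, parameterized by $(\operatorname{vec}\D,\operatorname{vec}\AICA)\in\mathbb C^{k}$, $k=PR+IR$, and $S$ has measure zero in the ambient space, so a property that is generic on $\mathbb C^{2PR+2IR}$ need not be generic on $S$. I would close this gap by observing that (i) the algebraic-geometric proof of Theorem~\ref{th:main} in fact exhibits the non-uniqueness set as a proper algebraic subvariety $\{h=0\}$ of the (holomorphic) ambient parameter space, with $h$ a nonzero polynomial, and (ii) after the complex-linear change of coordinates that pairs each coordinate block with its conjugate, $S$ becomes the real coordinate subspace $\mathbb R^{2PR+2IR}\subset\mathbb C^{2PR+2IR}$, on which a nonzero polynomial cannot vanish identically. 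Therefore $h$ restricts to a nonzero real-analytic function on $S\cong\mathbb C^{k}$, whose zero set is Lebesgue-null --- hence $\mu_k$-null --- which is exactly \eqref{eq:ICA_measure}. (As an alternative to (i)--(ii), it suffices to exhibit one explicit admissible $(\AICA,\D)$ with a unique decomposition, which equally shows that $h$ does not vanish on all of $S$.)
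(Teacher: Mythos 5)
Your proof is correct in substance and arrives at the paper's bound by essentially the same mechanism (recast \eqref{eq:SOBI} as \eqref{eq:sumstrucrank1_structured}, then walk through the checklist of Theorem~\ref{th:main} with $\widehat N=I^2$, $\widehat l=2I-1$, scaling invariance of the Segre set), but you route around the conjugation differently. The paper \emph{realifies}: it parameterizes $\baICA_r^*\otimes\baICA_r$ by the $2I$ real variables $(\operatorname{Re}\baICA_r,\operatorname{Im}\baICA_r)$, takes $\newM$ to be the real matrix built from $\operatorname{Re}\D,\operatorname{Im}\D$, and invokes Theorem~\ref{th:main} directly with $\mathbb F=\mathbb R$; assumptions 3--5 are then checked over $\mathbb C^{2I}$, where the complexified map becomes exactly your Segre map $(\bu,\bv)\mapsto\bv\otimes\bu$ after a linear change of variables. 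You instead \emph{complexify}: you apply the theorem over $\mathbb C$ to the relaxed (unconjugated) decomposition and then descend by hand to the real form $S$, arguing that the polynomial witness for ambient non-uniqueness cannot vanish on a real coordinate subspace. These are two sides of the same coin, and your descent step is precisely what case 3 ($\mathbb F=\mathbb R$) of the proof of Lemma~\ref{lemma:generic_con_2} does internally; the paper's formulation of Theorem~\ref{th:main} over $\mathbb F=\mathbb R$ with complex test points $\bmzetax\in\mathbb C^l$ is designed exactly so that the "main obstacle" you identify is already absorbed into the theorem. The cost of your route is that step (i) of your transfer argument cannot use Theorem~\ref{th:main} as a black box --- its conclusion is only a measure-zero statement, so you must open up its proof to extract the polynomial $h$ (which works here because $\f$ is the identity, the $q_n$ are constant and $\newM$ is polynomially parameterized, but it is a genuine extra appeal to the internals). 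Two small points: your parenthetical fallback --- exhibiting one admissible instance with a unique decomposition --- does not suffice, since the containment runs only one way (non-uniqueness implies $h=0$, not conversely), so a unique instance need not be a point where $h\ne 0$; and your assumption-1 check ("free $2P\times R$ matrix") must also be run through the real-form restriction, since $[\D^T\ \overline{\D}^T]^T$ is not free --- your polynomial-restriction argument does cover this (a generic complex $\D$ with $R\le 2P$ gives full column rank, though a real $\D$ would not when $P<R$), but it deserves an explicit sentence.
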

\begin{proof} \textup{(i)}\ 
 First we rewrite the equations in \eqref{eq:SOBI} as  matrix decomposition \eqref{eq:sumstrucrank1_structured}\footnotemark[1].
 \footnotetext[1]{Our derivation of a matrix version of \eqref{eq:SOBI} is similar to the derivation in \cite[Subsection 5.2]{MikaelnewUniq3rdorder}.}
In step \textup{(ii)} we will apply Theorem \ref{th:main} to  \eqref{eq:sumstrucrank1_structured}.

   Since  $\C_p^H=\sum\limits_{r=1}^Rd_{pr}^*\baICA_r\baICA_r^H$,  the $p$th equation in \eqref{eq:SOBI} is equivalent to the following pair of equations
\begin{gather*}
\operatorname{Re}\C_p=\frac{\C_p+\C_p^H}{2} =
\sum_{r=1}^R\operatorname{Re}d_{pr}\baICA_r\baICA_r^H,\\
\operatorname{Im}\C_p=\frac{\C_p-\C_p^H}{2i} = 
\sum_{r=1}^R\operatorname{Im}d_{pr}\baICA_r\baICA_r^H.
\end{gather*}
Since $\operatorname{vec}(\baICA\baICA^H)=\baICA^*\otimes\baICA$, we further obtain that
\begin{align*}
\operatorname{vec}(\operatorname{Re}\C_p)^T& =\\
& [\operatorname{Re}d_{p1} \dots \operatorname{Re}d_{pR}] [\baICA_1^*\otimes \baICA_1 \dots \baICA_R^*\otimes \baICA_R]^T,\\
\operatorname{vec}(\operatorname{Im}\C_p)^T& =\\
& [\operatorname{Im}d_{p1} \dots \operatorname{Im}d_{pR}] [\baICA_1^*\otimes \baICA_1 \dots \baICA_R^*\otimes \baICA_R]^T.
\end{align*}
Hence, the $P$ equations in \eqref{eq:SOBI} can be rewritten  as $\newX=\newM\newSm^T$, where 
\begin{align*}
&\newX=\\
&[ \operatorname{vec}(\operatorname{Re}\C_1) \dots \operatorname{vec}(\operatorname{Re}\C_P)\ 
\operatorname{vec}(\operatorname{Im}\C_1) \dots
 \operatorname{vec}(\operatorname{Im}\C_P)]^T,\\
&\newM=
 \left[\begin{matrix}
 \frac{\D+\D^*}{2}\\
 \frac{\D-\D^*}{2i}
 \end{matrix}\right]\in\mathbb R^{K\times R},\ K=2P,\  \text{and }\\
&\newSm=[\baICA_1^*\otimes \baICA_1\ \dots\ \baICA_R^*\otimes \baICA_R]\in\mathbb R^{N\times R},\ N=I^2. 
 \end{align*}
 Now we choose $l$, $\bmzeta$, $p_n$, $q_n$, and $\f$ such that  the columns of $\newSm$ are of  the form  \eqref{eq:column_s}.
 Note that the trivial parameterization $\newsvec(\bmzeta)= \bmzeta^*\otimes\bmzeta$ with $\bmzeta\in\mathbb C^I$ is not of the form \eqref{eq:column_s} because of the conjugation. However, 
since for $\baICA=\operatorname{Re}\baICA+i\operatorname{Im}\baICA$,
 $$
 \baICA^*\otimes \baICA=(\operatorname{Re}\baICA-i\operatorname{Im}\baICA)\otimes
 (\operatorname{Re}\baICA+i\operatorname{Im}\baICA),
 $$
 the parameterization 
 \begin{equation*}
 \begin{split}
  \newsvec(\bmzeta)=&([\zeta_1\ \dots\ \zeta_I]^T-i[\zeta_{i+1}\ \dots\ \zeta_{2I}]^T\otimes\\
&([\zeta_1\ \dots\ \zeta_I]^T+i[\zeta_{i+1}\ \dots\ \zeta_{2I}]^T),\quad  \bmzeta\in\mathbb R^l
\end{split}
  \end{equation*}
  with   $l=2I$,
  is of the  form \eqref{eq:column_s}. As a matter of fact, each component of $\newsvec(\bmzeta)$
  is a polynomial $p_n$ in $\zeta_1,\dots,\zeta_l$, $1\leq n\leq N$,  so we can set  $\f(\bmzeta)=\bmzeta$, and
  $q_1(\bmzeta)=\dots=q_{N}(\bmzeta)=1$.
 
It is clear that the  matrix $\newM$ can be parameterized independently of $\newSm$ by $m=2PR$ real parameters, namely, by the entries of the $P\times R$ matrices $\frac{\D+\D^*}{2}$ and $\frac{\D-\D^*}{2i}$. Thus,  the equations in \eqref{eq:SOBI} can be rewritten  as decomposition \eqref{eq:sumstrucrank1_structured} with $\z\in\Omega=\mathbb R^n$, where 
  $
  n=m+s=2PR+lR=2PR+2IR
  $.
Moreover, one can easily verify that \eqref{eq:SOBI} has a unique solution if and only if 
decomposition \eqref{eq:sumstrucrank1_structured} is unique.
In turn, since, obviously, \eqref{eq:ICA_measure} is equivalent to
\begin{equation*}
\begin{split}
&\mu_n\left\{\left(\operatorname{vec}((\D+\D^*)/2),\operatorname{vec}((\D-\D^*)/2i),\right.\right.\\
&\qquad\qquad\qquad\left.\operatorname{Re}\baICA_1,\operatorname{Im}\baICA_1,\dots,
\operatorname{Re}\baICA_R,\operatorname{Im}\baICA_R\right):\\
&\qquad\qquad\qquad\qquad\left. \text{solution of } \eqref{eq:SOBI} \text{ is not unique}\right\}=0,
\end{split}
\end{equation*}
it follows that \eqref{eq:ICA_measure} can be rewritten as \eqref{eq:def_gen_uniq}.

\textup{(ii)} To prove \eqref{eq:def_gen_uniq}  we check assumptions 1--6 in Theorem \ref{th:main}.
Assumption 1: it is clear that if $\D$ is generic, then, by the assumption $2P\geq R$, the matrix $\newM$ has full column rank. Assumptions 2--3 are trivial   since $\f$ is the identity mapping. Assumption 4:
  since the rank-$1$ matrices  of the form $\baICA\baICA^H$ span the  whole space of $I\times I$ matrices and $\newsvec(\operatorname{Re}\baICA,\operatorname{Im}\baICA)=\operatorname{vec}(\baICA\baICA^H)$ it follows that assumption 4 holds for $\widehat{N}=I^2$.  
    Assumption 5: an elementary computation shows that for a generic $\bmzeta$, 
  $\J(\br,\bmzetax)[\zetax_{I+1}\ \dots\ \zetax_{2I}\ -\zetax_1\ \dots\ -\zetax_I]=\bf 0$, implying that $\operatorname{rank}{(\J(\br,\bmzetax))}\leq l-1$, so we set  $\widehat{l}=l-1$.
  Assumption 6:
     since $\widehat{N}-\widehat{l} =I^2-2I+1$, assumption 6 holds by \eqref{eq:sobium_bound} since
   $\lambda\br(\bmzeta)=\lambda\newsvec(\bmzeta)=\newsvec(\sqrt{\lambda}\bmzeta)
  =\br(\sqrt{\lambda}\bmzeta)$.
  \end{proof}
  Now we consider the case $\tau_1=0$. The only difference with the case $\tau_1\ne 0$ is that the  diagonal matrix $\D_1=\operatorname{E}(\buICA_t\buICA_{t+\tau_1}^H)$  is  real, yielding that 
    \eqref{eq:SOBI}
  can be parameterized by $R$ real and $IR+(P-1)R$ complex parameters, or equivalently, by  
$n=R+2IR+2(P-1)R$ real parameters.
   \begin{theorem}\label{th:ICAtau_1ne0}
   Assume that  $\tau_1=0$ and        
$ R\leq \min(2P-1, (I-1)^2).$
 Then \eqref{eq:SOBI} has a unique solution for generic  real matrix $\D_1$ and generic complex matrices
 $\AICA$ and $\D_2,\dots,\D_P$, i.e.,
 \begin{equation*}
 \begin{split}
 &\mu_n\left\{d_{11},\dots,d_{1R},\left(\operatorname{vec}((\overline{\D}+\overline{\D}^*)/2),\operatorname{vec}((\overline{\D}-\overline{\D}^*)/2i),\right.\right.\\
 &\qquad\qquad\qquad\left.\operatorname{Re}\baICA_1,\operatorname{Im}\baICA_1,\dots,
 \operatorname{Re}\baICA_R,\operatorname{Im}\baICA_R\right):\\
 &\qquad\qquad\qquad\qquad\left. \text{solution of } \eqref{eq:SOBI} \text{ is not unique}\right\}=0,
 \end{split}
 \end{equation*}
where $\overline{\D}\in\mathbb C^{(P-1)\times R}$ denotes a matrix with entries $d_{pr}$ ($p>1$), 
$n=(2I+2P-1)R$, and $\mu_n$
is a measure that is a.c. with respect to the Lebesgue measure on $\mathbb R^n$ . 
 \end{theorem}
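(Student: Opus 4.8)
The plan is to imitate the two-step proof of Theorem~\ref{th:ICA}, keeping track of the single modification forced by $\tau_1=0$. In the first step I would pass from \eqref{eq:SOBI} to a matrix factorization exactly as there: splitting each $\C_p=\sum_r d_{pr}\baICA_r\baICA_r^H$ into its Hermitian part $\sum_r \operatorname{Re}(d_{pr})\baICA_r\baICA_r^H$ and anti-Hermitian part $\sum_r \operatorname{Im}(d_{pr})\baICA_r\baICA_r^H$ and vectorizing yields $\newX=\newM\newSm^T$ with the \emph{same} $\newSm=[\baICA_1^*\otimes\baICA_1\ \dots\ \baICA_R^*\otimes\baICA_R]$ as in Theorem~\ref{th:ICA}, so that $N=I^2$ and the column model is unchanged ($l=2I$, $\f$ the identity, $q_n\equiv 1$, and the $p_n$ the quadratic monomials obtained from $(\operatorname{Re}\baICA-i\operatorname{Im}\baICA)\otimes(\operatorname{Re}\baICA+i\operatorname{Im}\baICA)$). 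The new feature is that $\D_1=\operatorname{E}(\buICA_t\buICA_t^H)$ is real, so $\operatorname{Im}(d_{1r})=0$ for all $r$ and the row $\operatorname{vec}(\operatorname{Im}\C_1)$ of $\newX$ is identically zero; deleting it, together with the corresponding zero row of $\newM$, leaves $\newX$ and $\newM$ with $K=2P-1$ rows, $\newM$ being parameterized by the $R$ reals $d_{11},\dots,d_{1R}$, the $(P-1)R$ reals $\operatorname{Re}(d_{pr})$ ($p\ge 2$), and the $(P-1)R$ reals $\operatorname{Im}(d_{pr})$ ($p\ge 2$). Together with the $2IR$ real parameters $\operatorname{Re}\baICA_r,\operatorname{Im}\baICA_r$ that generate $\newSm$ this gives $\Omega=\mathbb R^n$ with $n=R+2(P-1)R+2IR=(2I+2P-1)R$, as in the statement; one then checks, exactly as in Theorem~\ref{th:ICA}, that \eqref{eq:SOBI} has a unique solution iff decomposition \eqref{eq:sumstrucrank1_structured} does, and that the measure-zero claim of the theorem coincides with \eqref{eq:def_gen_uniq}.

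The second step is to run the checklist of Theorem~\ref{th:main}. Assumptions~2 and~3 are immediate because $\f$ is the identity. Assumptions~4 and~5 involve only $\newSm$, which is the same as in Theorem~\ref{th:ICA}: the rank-$1$ Hermitian matrices $\baICA\baICA^H$ span all of $\mathbb C^{I\times I}$, so assumption~4 holds with $\widehat{N}=I^2$; and since multiplying $\baICA$ by a unit-modulus scalar leaves $\baICA\baICA^H$ fixed, the vector $[\zetax_{I+1}\ \dots\ \zetax_{2I}\ -\zetax_1\ \dots\ -\zetax_I]$ is annihilated by $\J(\br,\bmzetax)$ for generic $\bmzetax$, so $\operatorname{rank}\J(\br,\bmzetax)\le l-1$ and we may take $\widehat{l}=2I-1$. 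Moreover $\lambda\,\newsvec(\bmzeta)=\newsvec(\sqrt{\lambda}\,\bmzeta)$ for every $\lambda$, so $\operatorname{Range}(\br)$ is invariant under scaling and assumption~6 becomes $R\le\widehat{N}-\widehat{l}=I^2-2I+1=(I-1)^2$. The only assumption that feels the change is assumption~1: here $\newM\in\mathbb R^{(2P-1)\times R}$ and every one of its $(2P-1)R$ entries is one of the free real parameters $d_{1r}$, $\operatorname{Re}(d_{pr})$, $\operatorname{Im}(d_{pr})$, so a generic choice makes $\newM$ of full column rank exactly when $R\le 2P-1$. Combining the two requirements gives $R\le\min(2P-1,(I-1)^2)$, which is the hypothesis, and Theorem~\ref{th:main} then yields generic uniqueness of \eqref{eq:sumstrucrank1_structured}, hence of \eqref{eq:SOBI}.

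I do not expect a genuine obstacle: this is a routine adaptation of the $\tau_1\ne 0$ case, with $2P$ replaced by $2P-1$ in assumption~1. The one place that warrants a careful sentence is the bookkeeping around the vanishing row --- one must note that deleting $\operatorname{vec}(\operatorname{Im}\C_1)$ from $\newX$ (and the matching zero row from $\newM$) does not change the set of admissible factorizations, since under the real-$\D_1$ parameterization that row is identically zero and imposes no constraint, and that the reduced parameter vector still maps onto the data faithfully enough that ``generic $(\D_1,\dots,\D_P,\AICA)$'' means ``generic $\z\in\Omega$''. Once this is in place, every line of the proof of Theorem~\ref{th:ICA} carries over.
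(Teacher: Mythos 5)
Your proposal is correct and takes exactly the route the paper intends: the paper's own proof of this theorem consists of the single remark that it is ``essentially the same as that of Theorem \ref{th:ICA}'', and your write-up is a faithful elaboration of that adaptation --- deleting the identically zero row $\operatorname{vec}(\operatorname{Im}\C_1)$ so that $\newM$ has $2P-1$ rows of free real parameters (whence $R\le 2P-1$ for assumption 1), with $\newSm$ and hence assumptions 4--6 unchanged from the $\tau_1\ne 0$ case.
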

 \begin{proof}
 The proof is essentially the same as that of Theorem \ref{th:ICA}.
  \end{proof}
  Assuming that $R\leq P$, we check up to which value of $R$ condition \eqref{eq:sobium_bound} in Theorem \ref{th:ICA}
 and conditions $R(R-1)\leq I^2(I-1)^2/2$ in \cite{2008LievenSOBIUM} and  $R\leq (I^2-I)/2$ in \cite{AlgGeom1} hold. The results  are shown
 in Table \ref{table:ICA}. Note that under the condition in \cite{2008LievenSOBIUM} the mixing matrix $\AICA$ can be found from an eigenvalue decomposition in the exact case. Hence, it is not surprising that this condition is more restrictive. The condition in \cite{AlgGeom1} is more restrictive since, if $\D_p$ is complex, the unsymmetric matrix $\M\D_p\M^H$ has more distinct entries than the complex symmetric matrix $\M\D_p\M^T$.
  \begin{table}[!t]
 \renewcommand{\arraystretch}{1.3}
  \centering
         \caption{Upper bounds on the number  of sources in SOBI}
         \label{table:ICA}
         \begin{tabular}{|l|c||c|c|c|c|c|c|c|}
             \cline{2-9}
              \multicolumn{1}{c|}{} & \multicolumn{1}{|c||}{$I$} & 3 & 4 & 5 & 6 & 7 & 8 & 9 \\
             \hline
             Theorem \ref{th:ICA}
              & $\mathbb F=\mathbb C$ & 4 & 9  &   16 & 25&  36& 49&  64\\
             \hline
             \cite[Eq. (15)]{2008LievenSOBIUM}
              & $\mathbb F=\mathbb C$ & 4 & 9 & 14 & 21 & 30& 40  & 51 \\
            \hline
            \cite[Proposition 1.11]{AlgGeom1} & $\mathbb F=\mathbb R$\rlap{\textsuperscript{*}} & 3 & 6 & 10 & 15 & 21& 28 & 36\\
            \hline 
		            \multicolumn{9}{l}{\scriptsize\textsuperscript{*}or $\mathbb F=\mathbb C$ if the superscript ``$H$'' in \eqref{eq:SOBI} is replaced by the superscript ``$T$''}                         
         \end{tabular}
     \end{table}

\section{An application in deterministic signal separation using mild source models} \label{sampled sources}
\subsection{Context and contribution}
We have recently proposed tensor-based algorithms for the deterministic blind separation of signals that can be modeled as exponential polynomials (i.e., sums and/or products of exponentials, sinusoids and/or polynomials) \cite{LL1Lieven2011} or as rational functions \cite{Otto_Lowner}. These signal models are meant to be little restrictive; on the other hand, they enable a unique source separation under certain conditions. The approach is somewhat related to sparse modelling \cite{BDE}. In sparse modelling, 
matrix $\M$ in \eqref{eq:first_eq} is known but has typically more columns than rows while most of the entries of $\Sm$ are zero. That is, the nonzero entries of $\Sm$ make sparse combinations of the columns of $\M$ (called the ``dictionary'') to model $\X$. The uniqueness of the model depends on the degree of linear independence of the columns of $\M$ and the degree of sparsity of the rows of $\Sm$  \cite{BDE}.
In \cite{LL1Lieven2011,Otto_Lowner} on the other hand, the basis vectors are estimated as well, by optimization over continuous variables. By way of example, in the case of sparse modelling of a sine wave, the columns of $\M$ could be chosen as sampled versions of 
$\sin((\omega_0 + k \Delta \omega) t)$ for a number of values $k$ (say $k=-K,\dots,-1,0,1,\dots,K$ so that $R=2K+1$), and $\omega_0$ and $\Delta \omega$ are fixed. On the other hand, in \cite{LL1Lieven2011} one optimizes over a continuous variable $\omega$ to determine the best representation $\sin(\omega t)$; in this way the accuracy is not bounded by $\Delta \omega$.

In \cite{LL1Lieven2011,Otto_Lowner} deterministic uniqueness conditions are given for exponential polynomial and rational source models. Here, we propose generic uniqueness conditions for the case that the mixing matrix has full column rank.

We actually consider a more general family of models, namely we assume that the source signals
$s_1(t),\dots,s_R(t)$ can be modeled as the composition of a known multivariate rational function and functions of the type $t$, $\cos (\omega t+\phi)$, $\sin (\omega t+\phi)$, and $a^t$.
We assume that the discrete-time signals are obtained by sampling at the points $t=1,\dots,N$.
The observed data are a mixture of the sources:
\begin{equation}\label{eq:sep_model}
\X=\M\left[\begin{matrix}s_1(1)&\dots & s_1(N)\\
\vdots& \vdots &\vdots\\
s_R(1)&\dots & s_R(N)
\end{matrix}\right]=\M\Sm^T.
\end{equation}
\subsection{An example}
To simplify the presentation we will consider the concrete case where the source signals can be modelled as
\begin{equation}\label{eq:example_s_r}
s_r(t)=\frac{a_r^t}{t}+\frac{b_r+t}{c_r+t}\cos(\alpha_r t+\phi_r)+\cos(\beta_r t),\quad t\in\mathbb R
\end{equation}
for a priori unknown parameters $a_r$, $b_r$, $c_r$, $\alpha_r$, $\phi_r$ and $\beta_r$.
That is, $s_r(t)$ is the composition of the known rational function
$$
R(x_1,\dots,x_6)=\frac{x_1}{x_2}+\frac{x_3+x_2}{x_4+x_2}x_5+x_6
$$
and the functions $x_1(t)=a_r^t$, $x_2(t)=t$, $x_3(t)=b_r$, $x_4(t)=c_r$, $x_5(t)=\cos (\alpha_r t+\phi_r)$, and $x_6=\cos \beta_rt$.
The general case can be studied similarly.

In the remaining part of this subsection we show that 
if (i) $R\leq N-6$, (ii) the parameters $a_r$, $b_r$, $c_r$, $\alpha_r$, $\phi_r$, and $\beta_r$ are generic, and (iii) the mixing matrix $\M$ has full column rank,  then the mixing matrix and the  sources $s_1(t),\dots, s_R(t)$ can be uniquely recovered.

We rewrite \eqref{eq:sep_model} as matrix decomposition \eqref{eq:sumstrucrank1_structured}. We set $\newX=\X$ and
$\newM(\z)=\M$.
It is clear that  the signals in \eqref{eq:example_s_r} can be parameterized as
 \begin{equation}\label{eq:example_s}
 s(t)=\frac{\zeta_1^t}{t}+\frac{\zeta_2+t}{\zeta_3+t}\cos(\zeta_4 t+\zeta_5)+\cos(\zeta_6 t),\quad t\in\mathbb R,
 \end{equation}
 where $\bmzeta= [\zeta_1\ \dots\ \zeta_6]^T=[a\ b\ c\ \alpha\ \phi\ \beta]^T$, so we set $\newsvec(\bmzeta)=[s(1)\ \dots\ s(N)]^T$.
First, we bring $\newsvec(\bmzeta)$ into the form \eqref{eq:column_s}. Then we will check
 assumptions  1--6  in Theorem \ref{th:main}. 
 
The following identities are well-known:
\begin{equation}\label{eq:cos_sine}
\cos\zeta=\frac{1-\tan^2\frac{\zeta}{2}}{1+\tan^2\frac{\zeta}{2}},\quad 
\sin\zeta=\frac{2\tan\frac{\zeta}{2}}{1+\tan^2\frac{\zeta}{2}}.
\end{equation} 
We will need the following generalization of \eqref{eq:cos_sine}.
\begin{lemma}\label{lemma:cos_sin}
There exist a polynomial $P_n$ and rational functions $Q_n$ and $R_n$ such that
\begin{align}
\cos \zeta n &= P_n(\cos\zeta) = Q_n\left(\tan\frac{\zeta}{2}\right),\label{eq:thelastequation1}\\
\sin \zeta n &= R_n\left(\tan\frac{\zeta}{2}\right).\label{eq:thelastequation2}
\end{align}
\end{lemma}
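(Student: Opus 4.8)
The plan is to prove the two claimed identities by the standard induction on $n$ based on the angle-addition formulas, and then convert the result into the stated rational form via the Weierstrass substitution \eqref{eq:cos_sine}. First I would treat \eqref{eq:thelastequation1}: the existence of a polynomial $P_n$ with $\cos n\zeta = P_n(\cos\zeta)$ is precisely the classical Chebyshev polynomial statement, which follows by induction on $n$ from the recurrence $\cos(n+1)\zeta = 2\cos\zeta\cos n\zeta - \cos(n-1)\zeta$ (the base cases $n=0,1$ being $P_0\equiv 1$, $P_1(x)=x$). One also needs the companion fact that $\sin n\zeta = \sin\zeta\, U_{n-1}(\cos\zeta)$ for a polynomial $U_{n-1}$, proved by the same simultaneous induction using $\sin(n+1)\zeta = \sin n\zeta\cos\zeta + \cos n\zeta\sin\zeta$.

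Next I would perform the substitution. Writing $x = \tan\frac{\zeta}{2}$, formulas \eqref{eq:cos_sine} give $\cos\zeta = \frac{1-x^2}{1+x^2}$ and $\sin\zeta = \frac{2x}{1+x^2}$, both rational in $x$. Substituting into $\cos n\zeta = P_n(\cos\zeta)$ immediately yields $\cos n\zeta = P_n\!\left(\frac{1-x^2}{1+x^2}\right)$, which is a rational function of $x$; call it $Q_n(x)$. Likewise, substituting into $\sin n\zeta = \sin\zeta\, U_{n-1}(\cos\zeta)$ gives $\sin n\zeta = \frac{2x}{1+x^2}\,U_{n-1}\!\left(\frac{1-x^2}{1+x^2}\right) =: R_n(x)$, again rational in $x$. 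This establishes \eqref{eq:thelastequation1} and \eqref{eq:thelastequation2}, with $Q_n$ and $R_n$ obtained explicitly by composing the Chebyshev-type polynomials with the degree-two rational maps coming from the Weierstrass substitution.

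There is essentially no hard obstacle here; the statement is a routine consequence of classical trigonometric identities, and the only mild care needed is bookkeeping. Two small points deserve a remark in the write-up. First, the identities are stated for real $\zeta$, but since both sides of \eqref{eq:thelastequation1}–\eqref{eq:thelastequation2} are real-analytic (indeed the trigonometric functions are entire and the rational functions are analytic away from the poles of $1+\tan^2\frac{\zeta}{2}$), the identities, once verified on any interval, extend automatically; in any case all that the application in Section \ref{sampled sources} requires is the existence of such $P_n$, $Q_n$, $R_n$. Second, one should note that $Q_n$ and $R_n$ have denominators that are powers of $1+x^2$, which never vanishes for real $x$, so the rational functions are finite wherever $\tan\frac{\zeta}{2}$ is defined — this is the feature that will matter when these expressions are fed into the $p_n/q_n$ machinery of Theorem \ref{th:main}. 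The detailed proof, being elementary, is deferred to Appendix \ref{Appendix_B} as announced.
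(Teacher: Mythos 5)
Your proposal is correct and follows essentially the same route as the paper: establish that $\cos n\zeta$ is a polynomial in $\cos\zeta$ and that $\sin n\zeta$ is $\sin\zeta$ times a polynomial in $\cos\zeta$, then apply the Weierstrass substitution \eqref{eq:cos_sine}. The only cosmetic difference is that the paper quotes explicit closed-form expressions for these Chebyshev-type polynomials from a reference, whereas you derive their existence by the standard angle-addition induction; the substitution step is identical.
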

\begin{proof}
See Appendix B.
\end{proof}
From \eqref{eq:example_s} and Lemma \ref{lemma:cos_sin} it follows that
 \begin{equation*}
\begin{split}
&s(n) = \frac{\zeta_1^n}{n}+\frac{\zeta_2+n}{\zeta_3+n}\cos(\zeta_4 n+\zeta_5)+\cos(\zeta_6 n)=\\
&\frac{\zeta_1^n}{n}+\frac{\zeta_2+n}{\zeta_3+n}\left(\cos \zeta_4 n \cos\zeta_5-\sin \zeta_4 n \sin\zeta_5\right)+\cos(\zeta_6 n)=\\
&\frac{\zeta_1^n}{n}+\frac{\zeta_2+n}{\zeta_3+n}\left(Q_n\left(\tan\frac{\zeta_4}{2}\right) \frac{1-\tan^2\frac{\zeta_5}{2}}{1+\tan^2\frac{\zeta_5}{2}}-\right.\\
&\left.R_n\left(\tan\frac{\zeta_4}{2}\right) \frac{2\tan\frac{\zeta_5}{2}}{1+\tan^2\frac{\zeta_5}{2}}\right)+P_{n}(\cos\zeta_6)=
 \frac{p_n(\f(\bmzeta))}{q_n(\f(\bmzeta))},
\end{split}
\end{equation*}
where
\begin{align*}
&\frac{p_n(\bmzetax)}{q_n(\bmzetax)}=\frac{\zetax_1^n}{n}+\frac{\zetax_2+n}{\zetax_3+n}\left(Q_n(\zetax_4)\frac{1-\zetax_5^2}{1+\zetax_5^2}-\right.\\
&\left.\qquad\qquad\qquad\qquad\qquad\quad\qquad R_n(\zetax_4)\frac{2\zetax_5}{1+\zetax_5}\right)+P_n(\zetax_6),\\
& \f(\zeta_1,\zeta_2,\zeta_3,\zeta_4,\zeta_5,\zeta_6) = [\zeta_1\ \zeta_2\ \zeta_3\ \tan\frac{\zeta_4}{2}\ \tan\frac{\zeta_5}{2}\ \cos\zeta_6]^T. 
\end{align*}
Thus, 
$\newsvec(\bmzeta)=[s(1)\ \dots\ s(N)]^T$ is of the  form \eqref{eq:column_s} and $l=6$. Now we check
assumptions  1--6  in Theorem \ref{th:main}: 1) holds by our assumption (iii); 2) and 3) are trivial; 
4) holds for $\widehat N= N$ since the vectors $\newsvec(\zeta_1,\zeta_2,\dots,\zeta_6)-\newsvec(0,\zeta_2,\dots,\zeta_6)=[\frac{\zeta_1}{1}\ \dots\ \frac{\zeta_1^N}{N}]^T$ span the entire space $\mathbb F^N$;
 5) holds for $\widehat l=l=6$; 6) holds by assumption (i).

\subsection{Separation of exponential polynomials and separation of rational functions}
The cases where the sources in \eqref{eq:sep_model} can be expressed
as sampled exponential polynomials
\begin{equation}\label{eq:exp_poly}
\begin{split}
s(n) =  &\sum\limits_{f=1}^F (p_{0f} +p_{1f}n+\dots+p_{d_ff}n^{d_f})a_f^n=\\
&\sum\limits_{f=1}^F P_f(n)a_f^n,\quad n=1,\dots,N
\end{split}
\end{equation}
and sampled rational functions
\begin{equation}\label{eq:rat_funct}
s(n) = \frac{a_0+a_1n+\dots+a_pn^p}{b_0+b_1n+\dots+b_qn^q},\quad n=1,\dots,N
\end{equation}
were studied in \cite{LL1Lieven2011} and \cite{Otto_Lowner}, respectively.

The following two theorems
complement results on generic uniqueness from \cite{LL1Lieven2011} and \cite{Otto_Lowner}.
In contrast to papers \cite{LL1Lieven2011} and \cite{Otto_Lowner} we do not exploit specific properties
of Hankel or L\"owner matrices in our derivation. We only use the source models \eqref{eq:exp_poly}--\eqref{eq:rat_funct} for verifying 
the assumptions in Theorem \ref{th:main}.
\begin{theorem} \label{th:exp_poly}
Assume that the mixing matrix $\M$ has full column rank and  that
\begin{equation}\label{eq:bound_exp_poly}
R\leq N-(d_1+\dots d_F+2F),
\end{equation}
then $\M$ and $R$ generic sources of form \eqref{eq:exp_poly} can be uniquely recovered from 
the observed data $\X = \M\Sm^T$.
\end{theorem}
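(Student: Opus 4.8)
The plan is to verify the six assumptions of Theorem~\ref{th:main}, following the route of the worked example earlier in this section. First I would recast \eqref{eq:sep_model}, with sources of the form \eqref{eq:exp_poly}, as a structured rank-$1$ decomposition \eqref{eq:sumstrucrank1_structured}: put $\newX=\X$ and $\newM(\z)=\M$ (so $\newM$ does not depend on $\z$, i.e., $m=0$), and parameterize a generic source $s$ of the form \eqref{eq:exp_poly} by the vector $\bmzeta$ that collects, for each $f=1,\dots,F$, the $d_f+1$ coefficients of $P_f$ together with the base $a_f$; thus $l=d_1+\dots+d_F+2F$ and $\newsvec(\bmzeta)=[s(1)\ \dots\ s(N)]^T$. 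Since each $s(n)=\sum_{f=1}^F\bigl(\sum_{j=0}^{d_f}p_{jf}n^j\bigr)a_f^n$ is a polynomial in the entries of $\bmzeta$ (linear in the coefficients, of degree $n$ in the bases), we may take $q_1\equiv\dots\equiv q_N\equiv 1$ and let $\f$ be the identity map, which has the elementary form \eqref{eq:single_var}; with $p_n$ the obvious polynomial in $\bmzetax$, the vector $\newsvec(\bmzeta)$ is then of the required form \eqref{eq:column_s}, and $\br=\newsvec$. Because $\M$ has full column rank, unique recovery of $\M$ and of the $R$ sources (up to the usual column scaling and permutation) is equivalent to uniqueness of decomposition \eqref{eq:sumstrucrank1_structured} on $\Omega=\mathbb F^n$ with $n=Rl$, and genericity of the sources is genericity of $\z\in\Omega$.

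It then remains to check assumptions 1--6 of Theorem~\ref{th:main}. Assumption~1 holds because $\newM(\z)=\M$ has full column rank for every $\z$, by hypothesis. Assumptions~2 and~3 are immediate, since $\f$ is the identity, so $\J(\f,\bmzeta)$ is the identity matrix. For assumption~5 I would simply take $\widehat{l}=l$, which makes it vacuous; this is also optimal, because $l\le N-R<N$ and a short computation shows $\J(\br,\bmzetax)$ has full column rank $l$ for generic $\bmzetax$. For assumption~6 I would note that $\operatorname{Range}(\br)$ is invariant under scaling: multiplying every coefficient $p_{jf}$ by $\lambda$ multiplies $s(n)$, hence $\br(\bmzetax)$, by $\lambda$, and $\mathbf 0\in\operatorname{Range}(\br)$ (take all coefficients zero); hence assumption~6 asks exactly for $R\le\widehat{N}-\widehat{l}$, which with $\widehat{N}=N$ (established below) and $\widehat{l}=l=d_1+\dots+d_F+2F$ is precisely hypothesis \eqref{eq:bound_exp_poly}. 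Theorem~\ref{th:main} then yields the conclusion.

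The only step with genuine content is assumption~4, namely $\dim\operatorname{span}\{\br(\bmzetax):\bmzetax\in\mathbb C^l\}\ge\widehat{N}=N$, and it suffices to exhibit $N$ linearly independent vectors $\br(\bmzetax)$. Taking the parameters of the terms $f=2,\dots,F$ to be zero and $P_1$ to be a nonzero constant $c$ (a legitimate polynomial of degree $\le d_1$), $\br(\bmzetax)$ reduces to $c\,[a\ a^2\ \dots\ a^N]^T$, where $a$ is the base of the first term; letting $a$ run over $N$ distinct nonzero values $\alpha_1,\dots,\alpha_N$ gives the matrix $\operatorname{diag}(\alpha_1,\dots,\alpha_N)$ times the Vandermonde matrix with nodes $\alpha_1,\dots,\alpha_N$, which is nonsingular, so these $N$ vectors span $\mathbb C^N$. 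I expect this to be the only real obstacle; beyond it the work is mere bookkeeping --- confirming that each parameter specialization used above (zeroing terms, using a constant polynomial) is an admissible instance of model \eqref{eq:exp_poly}, and that the scaling-and-permutation correspondence between decomposition \eqref{eq:sumstrucrank1_structured} and the pair $(\M,\Sm)$ is the usual one, so that \eqref{eq:bound_exp_poly} is exactly condition~6.
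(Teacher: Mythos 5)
Your proposal is correct and follows essentially the same route as the paper: the same parameterization with $l=d_1+\dots+d_F+2F$, the choices $\widehat{l}=l$ and $\widehat{N}=N$, scaling invariance of $\operatorname{Range}(\br)$ via the coefficients, and the same witness for assumption 4 (the paper uses $\newsvec(\zeta,1,0,\dots,0)=[\zeta\ \dots\ \zeta^N]^T$, which is your constant-polynomial specialization with $c=1$). The extra remark that $\J(\br,\bmzetax)$ generically has full column rank is unverified but harmless, since taking $\widehat{l}=l$ makes assumption 5 vacuous.
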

\begin{proof}
We set 
\begin{align*}
\bmzeta &= [a_1\  p_{01}\ \dots\ p_{d_11}\ \dots\ a_F\  p_{0F}\ \dots\ p_{d_FF}]^T\in\mathbb F^l,\\
 l &= (2+d_1)+\dots+(2+d_F)=d_1+\dots+d_F+2F
\end{align*}
and check the assumptions in Theorem \ref{th:main} for $\newX=\X$, $\newM(\z)=\M$ and $\newsvec(\bmzeta) = [s(1)\ \dots\ s(N)]^T$: 1)--3) are trivial; 4) since the vectors $\newsvec(\zeta,1,0,\dots,0)=[\zeta\ \dots\ \zeta^N]^T$ span the entire space $\mathbb F^N$, we set $\widehat{N}=N$; 5) we set
$\widehat{l}=l$; 6) holds by \eqref{eq:bound_exp_poly}
since $\operatorname{Range}(\br)$   is invariant under scaling.
\end{proof}
\begin{theorem} \label{th:rat_funct}
Assume that the mixing matrix $\M$ has full column rank, $q\geq 1$, and that
\begin{equation}\label{eq:bound_rat_funct}
R\leq N-(p+q+1),
\end{equation}
then $\M$ and  $R$ generic sources of form \eqref{eq:rat_funct} can be uniquely recovered from  the observed data $\X = \M\Sm^T$.
\end{theorem}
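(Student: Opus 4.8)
The plan is to follow the proof of Theorem~\ref{th:exp_poly}: recast the separation problem \eqref{eq:sep_model} with rational sources \eqref{eq:rat_funct} as a decomposition of the form \eqref{eq:sumstrucrank1_structured} and then go through the checklist 1--6 of Theorem~\ref{th:main}. I would take $\newX=\X$, $\newM(\z)=\M$ (constant in $\z$), and parameterize one source by its numerator and denominator coefficients,
$$
\bmzeta=[a_0\ a_1\ \dots\ a_p\ b_0\ b_1\ \dots\ b_q]^T\in\mathbb F^l,\qquad l=p+q+2,
$$
so that $\newsvec(\bmzeta)=[s(1)\ \dots\ s(N)]^T$ is already of the form \eqref{eq:column_s} with $\f$ the identity map, numerator polynomial $p_n(\bmzetax)=\zetax_1+\zetax_2 n+\dots+\zetax_{p+1}n^p$ and denominator polynomial $q_n(\bmzetax)=\zetax_{p+2}+\zetax_{p+3}n+\dots+\zetax_{p+q+2}n^q$, both linear (hence polynomial) in $\bmzetax$. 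With this set-up, assumptions 2 and 3 are trivial, assumption 1 holds by the full-column-rank hypothesis on $\M$, and, as in the worked example earlier in this section, generic uniqueness of \eqref{eq:sumstrucrank1_structured} is equivalent to unique recovery of $\M$ together with $s_1,\dots,s_R$ at the sampling points.

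For assumption 4 I would show $\widehat N=N$, and this is the single step that uses the hypothesis $q\geq 1$: taking numerator $\equiv 1$ and denominator $c+n$ produces vectors of the form \eqref{eq:column_r} equal to $\bigl[\frac1{c+1}\ \dots\ \frac1{c+N}\bigr]^T$, admissible for every $c\notin\{-1,\dots,-N\}$, and for $N$ distinct such values $c_1,\dots,c_N$ the matrix $\bigl(\frac1{c_i+j}\bigr)_{i,j=1}^{N}$ is a Cauchy matrix, hence nonsingular; so $N$ linearly independent vectors of the required form exist and the span is all of $\mathbb F^N$. For assumption 5 I would take $\widehat l=l-1=p+q+1$: since $p_n$ and $q_n$ are homogeneous of degree $1$ in $\bmzetax$, the map $\bmzetax\mapsto\br(\bmzetax)$ is homogeneous of degree $0$, so Euler's relation gives $\J(\br,\bmzetax)\bmzetax=\bzero$ and therefore $\operatorname{rank}\J(\br,\bmzetax)\leq l-1$ for every $\bmzetax\neq\bzero$. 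Finally $\operatorname{Range}(\br)$ is invariant under scaling, since multiplying the numerator coefficients $\zetax_1,\dots,\zetax_{p+1}$ by $\lambda$ scales $\br(\bmzetax)$ by $\lambda$ while leaving $q_1\cdots q_N\neq0$ unchanged; hence assumption 6 only requires $R\leq\widehat N-\widehat l=N-(p+q+1)$, which is precisely \eqref{eq:bound_rat_funct}. Theorem~\ref{th:main} then yields the claim.

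The proof is mostly bookkeeping, and the one place needing a genuine (though classical) argument is assumption 4 --- which is exactly where $q\geq1$ is indispensable, as it is what makes the one-parameter family of ``shifted poles'' $\frac1{c+n}$ available and thereby produces, via Cauchy-matrix nonsingularity, the full span $\widehat N=N$. A second point worth one sentence, in the spirit of the proof of Theorem~\ref{th:ICA}, is the remark that $\J(\br,\bmzetax)$ genuinely drops rank; this justifies the sharper choice $\widehat l=l-1$ and hence the bound $N-(p+q+1)$ rather than the weaker $N-(p+q+2)$ one would get from $\widehat l=l$.
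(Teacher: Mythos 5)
Your proposal is correct and follows essentially the same route as the paper's proof: the same parameterization $\bmzeta=[a_0\ \dots\ a_p\ b_0\ \dots\ b_q]^T$ with $l=p+q+2$, the same Cauchy-matrix argument for assumption 4 (the paper takes the shifts $k=0,\dots,N-1$ and cites the nonsingularity of the resulting matrix), the same choice $\widehat l=l-1$ justified by $\J(\br,\bmzetax)\bmzetax=\bzero$ (which the paper calls an ``elementary computation'' and you derive more explicitly from degree-$0$ homogeneity), and the same scaling-invariance observation for assumption 6. Your added remarks on where $q\geq1$ enters and why $\widehat l=l-1$ is the sharper choice are accurate but do not change the argument.
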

\begin{proof}
We set 
$$
\bmzeta = [a_0\ \dots\ a_p\ b_0\ \dots\ b_q]^T\in\mathbb F^l,\quad  l = p+q+2
$$
and check the  assumptions in Theorem \ref{th:main} for $\newX=\X$, $\newM(\z)=\M$ and $\br(\bmzeta)=\newsvec(\bmzeta) = [s(1)\ \dots\ s(N)]^T$: 1)--3) are trivial; 4) since an $N\times N$ matrix  with $(k+1)$th column (for $k=0,\dots,N-1$) given by
$$
\newsvec(\underbrace{1,0,\dots,0}_{p+1},k,1,0,\dots,0)=[(k+1)^{-1}\ \dots\ (k+N)^{-1}]^T,
$$
is nonsingular \cite[p. 38]{HornJohnson},  we set $\widehat{N}=N$; 5) an elementary computation shows that for a generic $\bmzetax$, $\J(\br,\bmzetax)\bmzetax=\bf 0$, implying that $\operatorname{rank}(\J(\br,\bmzetax))\leq l-1$, so we set  $\widehat{l}=l-1$;
6) holds by \eqref{eq:bound_rat_funct} since $\operatorname{Range}(\br)$   is invariant under scaling.
\end{proof}
We assume that the matrix $\M$
is generic and compare the bounds in Theorem \ref{th:exp_poly} and Theorem \ref{th:rat_funct} with the generic bounds in \cite{LL1Lieven2011} and 
\cite{Otto_Lowner}, respectively.
Since $\M$ is generic, it has full column rank
if and only if $R\leq K$. Thus, we compare the bound  $R\leq \min(N-(d_1+\dots d_F+2F),K)$  with the bound $R(d_1+\dots d_F+F)\leq\lfloor \frac{N+1}{2}\rfloor$, $2\leq K$ in \cite{LL1Lieven2011}, and the bound  $R\leq \min(N-(p+q+1),K)$ with the bound $R\leq\frac{1}{\max(p,q)}\lfloor \frac{N+1}{2}\rfloor$, $2\leq K$ in \cite{Otto_Lowner}. On one hand, the bounds in
\cite{LL1Lieven2011} and \cite{Otto_Lowner} can be used in the undetermined case  ($2\leq K$), while our bounds work only in the overdetermined case ($R\leq K$). On the other hand, roughly speaking, our bounds are of the form $R\leq N-c$ while the bounds in \cite{LL1Lieven2011} and 
\cite{Otto_Lowner} are of the form $R\leq N/c$,
where $c$ is the number of parameters that describe a generic signal. In this sense our new uniqueness conditions are significantly more relaxed.

\section{Conclusion}\label{sec:conclusion}
Borrowing insights from algebraic geometry, we have presented a theorem that can be used for investigating generic uniqueness in BSS problems that can be formulated as a particular structured matrix factorization. We have used this tool for deriving generic uniqueness conditions in (i) SOBIUM-type independent component analysis and (ii) a class of deterministic BSS approaches that rely on parametric source models. 
In a companion paper we will use the tool to obtain generic results for structured tensor and coupled matrix/tensor factorizations.

\appendices
\section{Proof of Theorem \ref{th:main}}\label{Appendix_A}
In this appendix we consider the decomposition
\begin{equation}\label{eq:general_decomposition}
\newX=\newM\newSm^T=\sum_{r=1}^R \newmvec_r\newsvec_r^T,\quad \newsvec_r\in S,
\end{equation}
where  the matrix $\newM$ has full column rank and  $S$ denotes a known subset of $\mathbb F^N$.

In Theorem \ref{th:main_deterministic}  below, we present two conditions that guarantee the uniqueness  of decomposition \eqref{eq:general_decomposition}.
These conditions will be checked 
in the proof of Theorem \ref{th:main} for generic points in $S=\{\newsvec(\bmzeta):\ q_1(\f(\bmzeta))\cdots q_N(\f(\bmzeta))\ne 0\}$, where $\newsvec(\bmzeta)$ is defined in  \eqref{eq:column_s}. The latter proof is given in Subsection \ref{subsec:appAC}. The step from the deterministic formulation in Subsection
\ref{subsec:appAA} to the generic result in Subsection \ref{subsec:appAC} is taken in Subsection \ref{subsec:appAB}.
\vspace{-1em}
\subsection{A deterministic uniqueness result}\label{subsec:appAA}
\begin{theorem}\label{th:main_deterministic} Assume that
\begin{enumerate}[\IEEEsetlabelwidth{Z}]
\item the matrix $\newM$ has full column rank;
\item the columns $\newsvec_1,\dots,\newsvec_R$ of the matrix $\newSm$ satisfy the following condition:
\begin{equation}\label{eq:U2condition}
\begin{split}
 &\text{if at least two of the values }\ \lambda_1,\dots,\lambda_R\in\mathbb F\\
&\text{are nonzero, then }\ \lambda_1\newsvec_1+\dots+\lambda_R\newsvec_R\not\in S.
\end{split}
\end{equation}
\end{enumerate}
Then  decomposition \eqref{eq:general_decomposition} is unique. 
\end{theorem}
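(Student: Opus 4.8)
The plan is to take an arbitrary second representation $\newX=\sum_{r=1}^{R}\newmvec'_r(\newsvec'_r)^{T}$ with every $\newsvec'_r\in S$ and to show that it must coincide with $\sum_{r=1}^{R}\newmvec_r\newsvec_r^{T}$ after a permutation of the $R$ rank-$1$ terms. Hypothesis \eqref{eq:U2condition} enters the argument twice: once to force the columns $\newsvec_1,\dots,\newsvec_R$ to be linearly independent, and once to pin down the change of basis relating the two factorizations.

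\emph{Linear independence.} Assume $R\geq 2$. No $\newsvec_r$ is zero, because $\newsvec_{r_0}=\bzero$ would make $1\cdot\newsvec_{r_0}+1\cdot\newsvec_j=\newsvec_j$ a member of $S$ with two nonzero coefficients, contradicting \eqref{eq:U2condition}. Consequently any nontrivial dependence $\sum_r\lambda_r\newsvec_r=\bzero$ has at least two nonzero coefficients, say $\lambda_1,\lambda_2$; solving for $\newsvec_1$ exhibits $\newsvec_1\in S$ as the combination $\sum_{r\geq2}(-\lambda_r/\lambda_1)\newsvec_r$ of $\newsvec_1,\dots,\newsvec_R$ having at least one nonzero coefficient, and in the degenerate subcase $\newsvec_1=-(\lambda_2/\lambda_1)\newsvec_2$ the equivalent combination $\tfrac12\newsvec_1-\tfrac{\lambda_2}{2\lambda_1}\newsvec_2=\newsvec_1$ has two nonzero coefficients, so in every case \eqref{eq:U2condition} is violated. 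Hence $\operatorname{rank}\newSm=R$, and since $\newM$ has full column rank, $\operatorname{rank}\newX=R$.

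\emph{Matching the two factorizations.} Since $\newX=\newM'(\newSm')^{T}$ has rank $R$ while $\newM'$ has only $R$ columns, $\newM'$ also has full column rank, and its column space equals that of $\newX$, which in turn equals that of $\newM$. Therefore $\newM'=\newM\T$ for a unique invertible $\T\in\mathbb F^{R\times R}$, and cancelling the left-invertible factor $\newM$ in $\newM\newSm^{T}=\newM\T(\newSm')^{T}$ gives $\newSm'=\newSm\,\W^{T}$ with $\W:=\T^{-1}$, i.e. $\newsvec'_r=\sum_{j=1}^{R}w_{rj}\newsvec_j$ for every $r$. Now apply \eqref{eq:U2condition} once more: each $\sum_j w_{rj}\newsvec_j=\newsvec'_r$ lies in $S$, so at most one $w_{rj}$ is nonzero, and since the $r$-th row of the invertible matrix $\W$ cannot be zero, exactly one is; thus $\W$ is a permutation matrix times a nonsingular diagonal matrix. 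Writing $\newsvec'_r=c_r\newsvec_{\sigma(r)}$ accordingly (with $\sigma$ a permutation of $\{1,\dots,R\}$ and $c_r\neq0$), substituting into the two expansions of $\newX$, and using that $\newSm^{T}$ has full row rank hence a right inverse, one obtains $\newmvec'_r=c_r^{-1}\newmvec_{\sigma(r)}$, whence $\newmvec'_r(\newsvec'_r)^{T}=\newmvec_{\sigma(r)}\newsvec_{\sigma(r)}^{T}$ for every $r$. This is the claimed uniqueness.

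The main obstacle is the linear-independence step: the two uses of \eqref{eq:U2condition} are short, but one must carefully eliminate the degenerate configurations — a vanishing column $\newsvec_r$ or two proportional columns — which are exactly the borderline cases that \eqref{eq:U2condition} is designed to forbid. Everything afterwards (the rank count, the change-of-basis extraction, and the reduction of $\W$ to a scaled permutation) is routine linear algebra.
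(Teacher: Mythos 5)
Your proof is correct and follows essentially the same route as the paper's: first use condition \eqref{eq:U2condition} to force $\newSm$ to have full column rank, then relate the two factorizations by an invertible $R\times R$ matrix and invoke \eqref{eq:U2condition} a second time to reduce that matrix to a scaled permutation. The only (immaterial) differences are cosmetic: you dispose of the proportional-columns subcase with the $\tfrac12$ rescaling trick where the paper perturbs by a generic $\mu$, and you identify the linking matrix via equality of column spaces where the paper writes it as $\overbar{\newM}^{\dagger}\newM$.
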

\begin{proof}
We need to show that  if there exist $\overbar{\newM}$ and $\overbar{\newSm}$ such that
\begin{equation}\label{eq:altern_decomp}
\newX =\overbar{\newM}\overbar{\newSm}^T=\sum\limits_{r=1}^{R}\overbar{\newmvec}_r\overbar{\newsvec}_r^T,
\quad \overbar{\newsvec}_r\in S
\end{equation}
then decompositions \eqref{eq:general_decomposition} and \eqref{eq:altern_decomp} coincide up to  permutation of the rank-$1$ terms.

First we show that assumption 2 implies that $\newSm$ has full column rank. Assume that there exist $\lambda_1,\dots,\lambda_R$ for which
$\lambda_1\newsvec_1+\dots+\lambda_R\newsvec_R=\bzero$, such that at least one of these values being nonzero would imply that $\newSm$ does not have full column rank.

Then for any $\mu\not\in\{0, -\lambda_1\}$,
$
\frac{\lambda_1+\mu}{\mu}\newsvec_1+\frac{\lambda_2}{\mu}\newsvec_2+\dots+\frac{\lambda_R}{\mu}\newsvec_R=\newsvec_1\in S.
$
Hence, by assumption 2, at most one of the values $\lambda_1+\mu,\lambda_2,\dots,\lambda_R$ is nonzero.
Since $\mu\ne -\lambda_1$, we have that $\lambda_2=\dots\lambda_R=0$. Since $\lambda_1\newsvec_1=\lambda_1\newsvec_1+\dots+\lambda_R\newsvec_R=\bzero$, it follows that $\lambda_1=0$ or $\newsvec_1=\bzero$. One can easily verify that
$\newsvec_1=\bzero$ is in contradiction to assumption 2. Hence $\lambda_1=0$.
 Thus the matrix $\newSm$ has full column rank.

Since the matrices $\newM$ and $\newSm$ have full column rank, it follows from the identity
\begin{equation}\label{eq:X_MS_MS}
\newX =\newM\newSm^T=\overbar{\newM}\overbar{\newSm}^T
\end{equation}
that the matrices $\overbar{\newM}$ and $\overbar{\newSm}$ also have full column rank.
Hence, 
\begin{equation}\label{eq:MMS_eq_S}
\overbar{\newM}^\dagger
\newM
\newSm^T=
\overbar{\newSm}^T
\end{equation}
where
$
\overbar{\newM}^\dagger =
\left(
\overbar{\newM}^{H}\overbar{\newM}
\right)^{-1}\overbar{\newM}^{H}
$
denotes the left inverse of $\overbar{\newM}$. 
By  assumption 2, each row of the matrix $\overbar{\newM}^\dagger
\newM$ contains at most one nonzero entry.  Since the matrices
$\newSm$ and $\overbar{\newSm}$ have full column rank, the square matrix 
$\overbar{\newM}^\dagger \newM$ is nonsingular.
Thus, each row and each column of 
$\overbar{\newM}^\dagger \newM$ contains exactly one nonzero entry. Hence there exist an $R\times R$
nonsingular diagonal matrix $\bLambda$ and an $R\times R$ permutation matrix $\bP$ such that
$\overbar{\newM}^\dagger \newM=\bLambda\bP$.
From \eqref{eq:MMS_eq_S} it follows that
\begin{equation}\label{eq:LPS_S}
\bLambda\bP\newSm^T=
\overbar{\newM}^\dagger \newM
\newSm=\overbar{\newSm}^T.
\end{equation}
Substituting \eqref{eq:LPS_S} into \eqref{eq:X_MS_MS} and taking into account that
the matrix $\newSm$ has full column rank we obtain
\begin{equation}\label{eq:MPL_M}
\newM\bP^T\bLambda^{-1}=\overbar{\newM}.
\end{equation}
Equations \eqref{eq:LPS_S}--\eqref{eq:MPL_M} imply that
decompositions \eqref{eq:general_decomposition} and \eqref{eq:altern_decomp} coincide up to permutation of the rank-$1$ terms.
\end{proof}
Theorem \ref{th:main_deterministic}
has already been proved for the particular cases where decomposition \eqref{eq:general_decomposition} represents the CPD of a third-order tensor \cite[Section IV]{JiangSid2004},
the CPD of a partially symmetric of order higher than three \cite[Theorem 4.1]{StegemanSymm}, the CPD of an unstructured tensor of order higher than three \cite[Theorem 4.2]{Stegeman2010}, and the decomposition in multilinear rank-$(L,L,1)$ terms
\cite[Theorem 2.4]{LL1Lieven2011}.
\subsection{A generic variant of assumption 2 in Theorem \ref{th:main_deterministic}}\label{subsec:appAB}
Condition \eqref{eq:U2condition} means that 
 the subspace $\operatorname{span}\{\newsvec_1,\dots,\newsvec_R\}$ has dimension $R$ and may intersect the set $S$ only at ``trivial'' points $\lambda_r\newsvec_r$, that is
\begin{align}
\text{the vectors }\newsvec_1,\dots,\newsvec_R\ \text{are linearly independent and}\label{eq:U2condition11}\\
\operatorname{span}\{\newsvec_1,\dots,\newsvec_R\}\cap S\subseteq\{\lambda \newsvec_r:\ \lambda\in\mathbb F,\ 1\leq r\leq R\}.\label{eq:U2condition2}
\end{align}
Property \eqref{eq:U2condition2} is the key to proving uniqueness of \eqref{eq:general_decomposition}. We can easily find 
$\operatorname{span}\{\newsvec_1,\dots,\newsvec_R\}$ from the matrix $\newX$ if it can be assumed that the matrix $\newM$ has full column rank. On the other hand, property \eqref{eq:U2condition2} means that the only points in $\operatorname{span}\{\newsvec_1,\dots,\newsvec_R\}$ that have the hypothesized structure (encoded in the definition of the set $S$), are the vectors $\newsvec_r$, $1\leq r\leq R$ (up to trivial indeterminacies).
However, conditions \eqref{eq:U2condition} and \eqref{eq:U2condition2} are most often hard to check for particular points $\newsvec_1,\dots,\newsvec_R$. The checking may become easier if we focus on the generic case, and this is where algebraic geometry comes in. 
More precisely, if $S=V$ is an algebraic variety, then the classical trisecant lemma states that
if $R$ is sufficiently small, then \eqref{eq:U2condition2} holds for ``generic'' $\newsvec_1,\dots,\newsvec_R\in S$.
A set $V\subseteq \mathbb C^N$ is an algebraic variety if it is the set of solutions of a system of polynomial equations. It is clear that algebraic varieties form an interesting class of subsets of $\mathbb C^N$; however, is not easy to verify whether a given subset of $\mathbb C^N$ is a variety or not.
On the other hand, it is known that a set obtained by evaluating a known rational vector-function (such as $\br(\bmzetax)$ in \eqref{eq:column_r}) can be extended to a variety by taking the closure, i.e., by including its boundary. This is indeed what we will do in the proof of Lemma \ref{lemma:generic_con_2} below. 
First we give a formal statement of the trisecant lemma. 
\begin{lemma}\label{lemma:trisecant}(\cite[Corollary 4.6.15]{book_joins_and_intersections},\cite[Theorem 1.4]{Chiantini2002})
Let $V\subset \mathbb C^N$ be an irreducible algebraic variety and $R\leq \dim\operatorname{span}\{V\}-\dim V$ or $R\leq \dim\operatorname{span}\{V\}-\dim V-1$ depending on whether $V$ is invariant under scaling or not.
Let $\Z_V$ denote a set of points $(\bv_1,\dots,\bv_R)$ such that 
\begin{equation*} 
\operatorname{span}\{\bv_1,\dots,\bv_R\}\cap V\not\subset\{\lambda \bv_r:\ \lambda\in\mathbb C,\ 1\leq r\leq R\}.
\end{equation*}
Then the Zariski closure of $\Z_V$ is a 
proper subvariety of $V\times\dots\times V$ ($R$ times), that is,
there exists a polynomial $h(\bv_1,\dots,\bv_R)$ in $RN$ variables whose zero set does not contain 
$V\times\dots\times V$ but does contain $\Z_V$.
\end{lemma}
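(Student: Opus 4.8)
This is a classical statement; for a complete proof the plan is to cite \cite[Corollary 4.6.15]{book_joins_and_intersections} and \cite[Theorem 1.4]{Chiantini2002}, but let me indicate the geometric mechanism I would follow. First I would replace $\mathbb{C}^{N}$ by $\operatorname{span}\{V\}$, so that w.l.o.g.\ $V$ is non-degenerate; write $N=\dim\operatorname{span}\{V\}$ and $d=\dim V$, so the hypothesis reads $R\le N-d$ if $V$ is a cone and $R\le N-d-1$ otherwise. Next I would introduce the incidence set
\[
\mathcal{I}^{\circ}=\bigl\{(\bv_1,\dots,\bv_R,\bv)\in V^{R+1}:\ \bv\in\operatorname{span}\{\bv_1,\dots,\bv_R\},\ \bv\notin\textstyle\bigcup_{r}\mathbb{C}\bv_r\bigr\},
\]
let $\mathcal{I}$ be its Zariski closure and $\pi\colon\mathcal{I}\to V^{R}$ the projection that forgets $\bv$. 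Since $Z_V=\pi(\mathcal{I}^{\circ})$, we have $\overline{Z_V}\subseteq\overline{\pi(\mathcal{I})}$, so it is enough to show that $\pi$ is \emph{not} dominant: then $\overline{\pi(\mathcal{I})}$ is a proper closed subvariety of the irreducible variety $V\times\cdots\times V$, and any polynomial vanishing on it but not identically on $V^{R}$ is the required $h$.

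The core of the argument is a dimension count. On the one hand, if $\pi$ were dominant then $\dim\mathcal{I}\ge Rd$; and when $V$ is a cone a non-empty generic fibre of $\pi$ is invariant under $\bv\mapsto\lambda\bv$, hence has dimension $\ge1$, so in that case $\dim\mathcal{I}\ge Rd+1$. On the other hand, $\mathcal{I}$ lies in the abstract secant variety
\[
\operatorname{Sec}^{\mathrm{ab}}_{R}(V)=\overline{\bigl\{(\bv_1,\dots,\bv_R,\bv):\ \bv\in\operatorname{span}\{\bv_1,\dots,\bv_R\}\bigr\}}\subseteq V^{R}\times\mathbb{C}^{N},
\]
whose generic fibre over $V^{R}$ is an $R$-dimensional linear space, so $\dim\operatorname{Sec}^{\mathrm{ab}}_{R}(V)=Rd+R$. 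Imposing the further condition $\bv\in V$, of codimension $N-d$ in $\mathbb{C}^{N}$, one expects $\dim\mathcal{I}\le Rd+R-(N-d)=(R+1)d+R-N$; plugging in the hypothesis on $R$ gives $\dim\mathcal{I}\le Rd-1$ in the non-cone case and $\dim\mathcal{I}\le Rd$ in the cone case, contradicting the lower bounds above and hence forcing $\pi$ to be non-dominant.

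The hard part will be justifying the upper bound $\dim\mathcal{I}\le(R+1)d+R-N$, i.e.\ that imposing $\bv\in V$ really costs the full $N-d$ dimensions. Concretely, one must show that for generic $\bv_1,\dots,\bv_R\in V$ in the stated range of $R$, the linear section $V\cap\operatorname{span}\{\bv_1,\dots,\bv_R\}$ does not acquire an excess component beyond the points $\bv_r$ themselves (and, for a cone, the lines $\mathbb{C}\bv_r$) --- equivalently, that a generic $(R-1)$-plane through $R$ general points of $V$ is not ``defective''. This is a Terracini / uniform-position statement about tangent spaces to secant varieties, and it is exactly the point at which I would lean on the cited references rather than reprove it. I would also remark that the cone/non-cone dichotomy, hence the ``$-1$'' in the bound, falls out of projectivization: passing to $\widehat{V}\subseteq\mathbb{P}^{N-1}$ lowers both $\dim\operatorname{span}\{V\}$ and $\dim V$ by one, reducing the cone case to the ordinary projective trisecant lemma in one fewer dimension.
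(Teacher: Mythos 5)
The paper offers no proof of this lemma at all --- it is imported verbatim from \cite[Corollary 4.6.15]{book_joins_and_intersections} and \cite[Theorem 1.4]{Chiantini2002} --- so your plan of citing those same sources for the substance is exactly what the authors do. Your incidence-variety dimension count is a correct sketch of the standard argument behind the trisecant lemma, and you rightly isolate the one genuinely hard step (that the span of $R$ general points of $V$ cuts $V$ in no excess component, i.e.\ that imposing $\bv\in V$ costs the full codimension $N-d$) as the part that must be taken from the references rather than reproved.
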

It is the last sentence in the trisecant lemma that makes it a powerful tool for proving generic properties. Let us explain in more detail how this works. We can use $\Z_V$ to denote a set that poses problems in terms of uniqueness, in the sense that 
$\operatorname{span}\{\bv_1,\dots,\bv_R\}$  does {\em not} intersect $V$ only in the points that correspond to the pure sources. The trisecant lemma  states now that $\Z_V$ belongs to the zero set of a polynomial $h$ that is not identically zero and hence nonzero almost everywhere, i.e. the problematic cases occur in a measure-zero situation.
In order to make the connection with Theorem \ref{th:main} we will need the following notations:
\begin{align*}
\operatorname{Range}(\newsvec):=&\{\newsvec(\bmzeta):\ q_1(\f(\bmzeta))\cdots q_N(\f(\bmzeta))\ne 0,\ \bmzeta\in\mathbb F^l\},\\
\operatorname{Range}(\br):=&\{\br(\bmzetax):\ q_1(\bmzetax)\cdots q_N(\bmzetax)\ne 0,\ \bmzetax\in\mathbb F^l\}.
\end{align*}
\begin{lemma}\label{lemma:generic_con_2}
Let assumptions  2--6 in Theorem \ref{th:main} hold. Then assumption 2 in Theorem \ref{th:main_deterministic}
holds for $S=\operatorname{Range}(\newsvec)$
and $\newsvec_1=\newsvec(\bmzeta_1),\dots,\newsvec_R=\newsvec(\bmzeta_R)\in S$, where the vectors  
$\bmzeta_1,\dots,\bmzeta_R\in\mathbb F^l$ are generic.
\end{lemma}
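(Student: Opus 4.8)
The plan is to reduce the generic version of assumption~2 of Theorem~\ref{th:main_deterministic} to the trisecant lemma (Lemma~\ref{lemma:trisecant}) applied to a suitable irreducible variety $V$, and then transport the resulting measure-zero conclusion back from the $\bmzeta$-parameter space to the statement we actually need. First I would take $V$ to be the Zariski closure in $\mathbb C^N$ of $\operatorname{Range}(\br)$, where $\br(\bmzetax)=[p_1(\bmzetax)/q_1(\bmzetax)\ \dots\ p_N(\bmzetax)/q_N(\bmzetax)]^T$ is the rational model \emph{without} the nonlinear transformation $\f$. The reason to work with $\br$ rather than with $\newsvec=\br\circ\f$ is that $\operatorname{Range}(\br)$ is the image of $\mathbb C^l$ (minus the polar locus of the $q_n$) under a rational map, hence its closure $V$ is an irreducible algebraic variety of $\mathbb C^N$; the set $\operatorname{Range}(\newsvec)$ need not be semialgebraic because $\f$ is only analytic. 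I would then check that $\operatorname{span}\{V\}$ has dimension $\geq\widehat N$ (this is exactly assumption~4, since $\operatorname{span}\{\operatorname{Range}(\br)\}=\operatorname{span}\{V\}$) and that $\dim V\leq\widehat l$ (assumption~5, since the dimension of the image of a rational map equals the generic rank of its Jacobian $\J(\br,\bmzetax)$). Together with assumption~6, these give the hypothesis $R\leq\dim\operatorname{span}\{V\}-\dim V$, or $R\leq\dim\operatorname{span}\{V\}-\dim V-1$ according to whether $V$ is invariant under scaling, which is precisely the case split in assumption~6 (noting $\operatorname{Range}(\br)$ scaling-invariant $\iff$ its closure $V$ is). So the trisecant lemma applies and yields a nonzero polynomial $h(\bv_1,\dots,\bv_R)$ whose zero set contains the bad set $\Z_V$.

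Next I would pull $h$ back along the parameter map. Define $g(\bmzetax_1,\dots,\bmzetax_R):=h(\br(\bmzetax_1),\dots,\br(\bmzetax_R))\cdot\prod_{r=1}^R\prod_{n=1}^N q_n(\bmzetax_r)^{\deg\text{ large enough}}$; clearing denominators this is a polynomial on $\mathbb C^{Rl}$, and it is not identically zero because $h$ is nonzero on $V\times\cdots\times V$ and $\br$ is dominant onto $V$. Hence $g\neq 0$ outside a proper subvariety, i.e. on a set of full measure. Then compose with $\f$: since $\f$ has coordinate functions that are ratios of entire functions (assumption~2) and $\det\J(\f,\bmzeta^0)\neq 0$ for some $\bmzeta^0$ (assumption~3), the map $\f\colon\mathbb C^l\to\mathbb C^l$ is a nonconstant analytic map with non-identically-zero Jacobian determinant; therefore $\f$ maps sets of positive measure to sets of positive measure, equivalently the pullback of a measure-zero set under $\f$ is measure zero (this is the standard fact that a nonconstant holomorphic map does not collapse positive-measure sets, using that the analytic set $\{\det\J(\f,\cdot)=0\}$ has measure zero and $\f$ is a local biholomorphism off it). Consequently, for generic $\bmzeta_1,\dots,\bmzeta_R\in\mathbb F^l$ the tuple $(\f(\bmzeta_1),\dots,\f(\bmzeta_R))$ avoids $\{g=0\}$, so $(\br(\f(\bmzeta_1)),\dots,\br(\f(\bmzeta_R)))=(\newsvec(\bmzeta_1),\dots,\newsvec(\bmzeta_R))$ avoids $\Z_V$ while still lying in $\operatorname{Range}(\newsvec)\subseteq V$. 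By the defining property of $\Z_V$ in Lemma~\ref{lemma:trisecant}, this means $\operatorname{span}\{\newsvec_1,\dots,\newsvec_R\}\cap V\subseteq\{\lambda\newsvec_r\}$, and since $\operatorname{Range}(\newsvec)\subseteq V$ the same inclusion holds with $V$ replaced by $S=\operatorname{Range}(\newsvec)$; generic linear independence of $\newsvec_1,\dots,\newsvec_R$ is part of the same conclusion (it is the case $\lambda_1=\cdots=\lambda_R$ all but one zero forced). This is exactly \eqref{eq:U2condition}.

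The main obstacle I expect is \emph{the measure-theoretic transfer through $\f$ and the real-versus-complex bookkeeping}. The trisecant lemma lives over $\mathbb C$ and produces a complex polynomial, but Theorem~\ref{th:main} must also cover $\mathbb F=\mathbb R$ with $\mu_n$ merely absolutely continuous w.r.t. Lebesgue measure; one has to argue that the restriction of a nonzero complex polynomial to $\mathbb R^{Rl}$ is still not identically zero (true, because a polynomial vanishing on $\mathbb R^{Rl}$ vanishes on $\mathbb C^{Rl}$) and hence its real zero set has Lebesgue measure zero, and then that this survives both the pullback by $\f$ and the passage to the absolutely continuous measure $\mu_n$ on $\Omega$. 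A secondary technical point is verifying carefully that $\dim V$ equals the generic rank of $\J(\br,\cdot)$ and that $\operatorname{span}\{V\}=\operatorname{span}\{\operatorname{Range}(\br)\}$ — routine, but needed to line up assumptions 4 and 5 with the trisecant hypotheses. Everything else is bookkeeping: choosing exponents large enough when clearing the $q_n$'s, and observing that the scaling-invariance dichotomy matches the two cases of assumption~6.
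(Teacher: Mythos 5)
Your overall route is the paper's: apply the trisecant lemma to the Zariski closure $V$ of $\operatorname{Range}(\br)$, match its hypotheses to assumptions 4--6, and pull the resulting polynomial $h$ back through $\br$ and then through $\f$. The dimension bookkeeping ($\dim V\leq\widehat l$ from the generic Jacobian rank, $\dim\operatorname{span}V\geq\widehat N$, the scaling dichotomy) is exactly right. However, the step where you transport the measure-zero conclusion through $\f$ has a genuine gap. You argue that $\f$ is a local biholomorphism off its critical set, hence preimages of null sets are null. Over $\mathbb C$ this works, but for $\mathbb F=\mathbb R$ the map $\f$ sends $\mathbb R^l$ into $\mathbb C^l$ (think of the paper's own Vandermonde example $f(\zeta)=e^{i\zeta}$ with $\zeta\in\mathbb R$): the image of $\mathbb R^l$ is a real $l$-dimensional set of Lebesgue measure zero in $\mathbb C^l$, so ``$\f$ maps positive measure to positive measure'' fails in the relevant sense, and a null subset of $\mathbb C^{Rl}$ can perfectly well contain the entire image of $(\mathbb R^l)^R$. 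The paper avoids this by composing first: it forms $h_{num}(\f(\bmzeta_1),\dots,\f(\bmzeta_R))$, clears the denominators $f_{j,den}$ to obtain a single function $\g_{num}$ analytic on all of $\mathbb C^{Rl}$ (this is where the requirement in assumption 2 that $f_{j,num},f_{j,den}$ be analytic on $\mathbb C^l$ is used), proves $\g_{num}\not\equiv 0$ via the inverse function theorem at the point $\bmzeta^0$ of assumption 3, and only then invokes the fact that the zero set of a nonzero analytic function on $\mathbb C^{Rl}$ has measure zero both in $\mathbb C^{Rl}$ and in $\mathbb R^{Rl}$ (the latter by the identity theorem). Your argument needs this reordering; the measure-pullback step as stated does not descend to the real slice.

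A second, smaller gap: generic linear independence of $\newsvec_1,\dots,\newsvec_R$ is not ``part of the same conclusion'' of the trisecant lemma---condition \eqref{eq:U2condition2} can hold for linearly dependent vectors---so it must be produced separately. The paper does this with an $R\times R$ minor of $[\br(\bmzeta_1)\ \dots\ \br(\bmzeta_R)]$, nonzero at some point because assumptions 4 and 6 give $\widehat N\geq R+\widehat l\geq R$ independent vectors in the range; its numerator is a polynomial vanishing on the dependence locus and is pulled back through $\f$ in the same way as $h_{num}$. This separate function is not cosmetic: over $\mathbb R$ the ``bad'' set for \eqref{eq:U2condition2} with real scalars is not contained in its complex counterpart (the intersection point may equal $\lambda\newsvec_r$ with $\lambda\in\mathbb C\setminus\mathbb R$), and the paper covers exactly this leftover case by observing that it forces a complex linear dependence, hence lies in the zero set of the independence function. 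Without that function your argument does not close the real case even after the first issue is repaired.
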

\begin{proof}
Since \eqref{eq:U2condition11}--\eqref{eq:U2condition2} is equivalent to \eqref{eq:U2condition} it is sufficient to
show that $\mu_{Rl}(W_{\newsvec})=\mu_{Rl}(\Z_{\newsvec})$=0, where 
\begin{gather*}
\begin{split}
W_{\newsvec}=\{[\bmzeta_1^T\ \dots\ \bmzeta_R^T]^T:
\newsvec_1=\newsvec(\bmzeta_1),\dots,\newsvec_R=\newsvec(\bmzeta_R)\\ 
\text{are linearly dependent}\},
\nonumber
\end{split}\\
\begin{split}
\Z_{\newsvec}=\{&[\bmzeta_1^T\ \dots\ \bmzeta_R^T]^T:\ \eqref{eq:U2condition2}\ \text {does not hold for }\\
&\ \ \newsvec_1=\newsvec(\bmzeta_1),\dots,\newsvec_R=\newsvec(\bmzeta_R)\}. 
\end{split}
\end{gather*}
It is a well-known fact that the zero set
of a nonzero analytic function on $\mathbb C^{Rl}$  has measure zero both on $\mathbb C^{Rl}$ and $\mathbb R^{Rl}$.
Thus, to prove $\mu_{Rl}(W_{\newsvec})=\mu_{Rl}(\Z_{\newsvec})$=0, we will show that there exist analytic functions $w$ and
$g$ of $Rl$ complex variables such that
\begin{align}
w\ \text{is not identically zero but vanishes on}\ W_{\newsvec},\label{eq:wfunc}\\
g\ \text{is not identically zero but vanishes on}\ \Z_{\newsvec}\label{eq:gfunc}.
\end{align}
We consider  the following three cases: 1) $\mathbb F=\mathbb C$ and $\f(\bmzeta)=\bmzeta$;
2) $\mathbb F=\mathbb C$ and $\f(\bmzeta)$ is arbitrary; 3)  $\mathbb F=\mathbb R$.

1)\ {\em Case $\mathbb F=\mathbb C$ and $\f(\bmzeta)=\bmzeta$}. In this case $\newsvec(\bmzeta)=\br(\bmzeta)$, thus, the sets 
$W_{\newsvec}$ and $\Z_{\newsvec}$ take the following form:
\begin{gather*}
\begin{split}
W_{\newsvec}=W_{\br}=\{[\bmzeta_1^T\ \dots\ \bmzeta_R^T]^T:
\newsvec_1=\br(\bmzeta_1),\dots,\newsvec_R=\br(\bmzeta_R)\\ 
\text{are linearly dependent}\},
\nonumber
\end{split}\\
\begin{split}
\Z_{\newsvec}=\Z_{\br}=\{[\bmzeta_1^T\ \dots\ \bmzeta_R^T]^T:\ \eqref{eq:U2condition2}\ \text {does not hold for }\\
 S=\operatorname{Range}(\br)\
\text{and } \ \newsvec_1=\br(\bmzeta_1),\dots,\newsvec_R=\br(\bmzeta_R)\}.
\end{split}
\end{gather*}
Here we prove that 
there exist polynomials $d_{num}$ and $h_{num}$ in $Rl$ variables such that
\eqref{eq:wfunc}--\eqref{eq:gfunc} hold for $w=d_{num}$ and $g=h_{num}$.

First we focus on $\Z_{\br}$. Let $V$ denote the Zariski closure of $\operatorname{Range}(\br)\subset\mathbb C^N$.
Since $\operatorname{Range}(\br)$ is the image of the open (hence irreducible) subset 
$\{\bmzeta:\ q_1(\bmzeta)\cdots q_N(\bmzeta)\ne 0,\ \bmzeta\in\mathbb C^l\}$ under the rational map
$$
\br:\ \bmzeta\mapsto \left[\frac{p_1(\bmzeta)}{q_1(\bmzeta)},\dots,\frac{p_N(\bmzeta)}{q_N(\bmzeta)}\right]^T,
$$
it follows that $\operatorname{Range}(\br)$ is also an irreducible set.
Hence $ V\subset\mathbb C^N$ is an irreducible variety and
 the dimension of $V$ is equal to  $\operatorname{rank}\J(\br,\bmzeta)$ at a generic point $\bmzeta\in\mathbb C^l$ \cite[p. 186]{lectures_on_curves}. Hence, by assumption 5 in Theorem \ref{th:main},
\begin{equation}\label{eq:dim_l}
\dim V\leq\widehat{l}.
\end{equation}
Since, by definition, $\operatorname{Range}(\br)$ consists of all vectors of the form \eqref{eq:column_r}, from assumption 4 in Theorem \ref{th:main} it follows that 
\begin{equation*}
\begin{split}
&\dim\operatorname{span}\operatorname{Range}(\br)=\\
&\dim\operatorname{span}\{\br(\bmzeta):\ q_1(\bmzeta)\cdots q_N(\bmzeta)\ne 0,\ \bmzeta\in\mathbb C^l\}\geq \widehat{N}.
\end{split}
\end{equation*}
Since $V\supseteq \operatorname{Range}(\br)$, it follows that
\begin{equation}\label{eq:dim_span}
\dim\operatorname{span} V \geq \dim\operatorname{span}\operatorname{Range}(\br)\geq\widehat{N}.
\end{equation}
Thus, by assumption 6 in Theorem \ref{th:main} and \eqref{eq:dim_l}--\eqref{eq:dim_span},
$$
R\leq\widehat{N}-\widehat{l}\leq \dim\operatorname{span} V-\dim V.
$$
Thus, we have shown that $V$ satisfies the assumptions in Lemma \ref{lemma:trisecant}. Let now the set 
$\Z_V$ and the polynomial $h(\bv_1,\dots,\bv_R)$ be as in Lemma \ref{lemma:trisecant}.
Since $V$ is the Zariski closure of $\operatorname{Range}(\br)$, it follows that
 $V\times\dots\times V$ is the Zariski closure of $\operatorname{Range}(\br)\times\dots\times \operatorname{Range}(\br)$.
 Since, by Lemma \ref{lemma:trisecant}, the zero set of $h(\bv_1,\dots,\bv_R)$ does not contain $V\times\dots\times V$,
it follows that the zero set of $h(\bv_1,\dots,\bv_R)$ does not contain $\operatorname{Range}(\br)\times\dots\times \operatorname{Range}(\br)$.
 Hence, there exist
 $\bmzeta_1^0,\dots,\bmzeta_R^0\in\mathbb C^l$ such that  
$
 h(\br(\bmzeta_1^0),\dots,\br(\bmzeta_R^0))\ne 0.
 $
On the other hand, since $\operatorname{Range}(\br)$ is a subset of $V$, from the definitions of $\Z_{\br}$ and $\Z_V$ it follows that
$$
(\br(\bmzeta_1),\dots,\br(\bmzeta_R))\in \Z_V \ \text{ for all }\ [\bmzeta_1^T\ \dots\ \bmzeta_R^T]^T\in \Z_{\br}.
$$
Hence, by Lemma  \ref{lemma:trisecant}, 
\begin{equation}\label{eq:h_not_zero}
h(\br(\bmzeta_1),\dots,\br(\bmzeta_R)) =0\ \text{ for all }\ [\bmzeta_1^T\ \dots\ \bmzeta_R^T]^T\in \Z_{\br}.
\end{equation}
Since the function $h(\br(\bmzeta_1),\dots,\br(\bmzeta_R))$ is a composition of the polynomial $h$ in $RN$ variables  and $RN$ rational functions
$\frac{p_1(\bmzeta_1)}{q_1(\bmzeta_1)},\dots,\frac{p_N(\bmzeta_R)}{q_N(\bmzeta_R)}
$,
 it follows that $h(\br(\bmzeta_1),\dots,\br(\bmzeta_R))$ can be written as a ratio of two polynomials in the entries of $\bmzeta_1,\dots,\bmzeta_R$,
\begin{equation}\label{eq:h_ratio}
h(\br(\bmzeta_1),\dots,\br(\bmzeta_R))=\frac{h_{num}(\bmzeta_1,\dots,\bmzeta_R)}{h_{den}(\bmzeta_1,\dots,\bmzeta_R)}.
\end{equation}
By \eqref{eq:h_not_zero}--\eqref{eq:h_ratio}, $h_{num}$  vanishes on $\Z_{\br}$ and is not identically zero.
That is, \eqref{eq:gfunc} holds for $g=h_{num}$.

Now we focus on $W_{\br}$. By assumption 6 in Theorem \ref{th:main}, $\widehat{N}\geq R+\widehat{l}$, so assumption 4 implies that there exist $\bmzeta_1^0,\dots,\bmzeta_R^0\in\mathbb C^l$ such that the vectors $\br(\bmzeta_1^0),\dots,
\br(\bmzeta_R^0)$ are linearly independent. Hence there exists an $R\times R$ submatrix $\Rsub(\bmzeta_1,\dots,\bmzeta_R)$ of $[\br(\bmzeta_1)\ \dots\ \br(\bmzeta_R)]$ whose determinant $d(\bmzeta_1,\dots,\bmzeta_R)$ is not zero at the point  $(\bmzeta_1^0,\dots,\bmzeta_R^0)$.
On the other hand, $d(\bmzeta_1,\dots,\bmzeta_R)$ vanishes on $W_{\br}$ by definition. Since a determinant  is a multivariate polynomial, and since the entries of $\Rsub(\bmzeta_1,\dots,\bmzeta_R)$ are rational functions of $\bmzeta_1,\dots,\bmzeta_R$,
$d(\bmzeta_1,\dots,\bmzeta_R)$ can be written as a ratio of two polynomials
$d_{num}$ and $d_{den}$  in the entries of $\bmzeta_1,\dots,\bmzeta_R$.
It is clear that $d_{num}(\bmzeta_1^0,\dots,\bmzeta_R^0)\ne 0$ and that $d_{num}$ vanishes on $W_{\br}$.
That is, \eqref{eq:wfunc} holds for $w=d_{num}$.

2)\ {\em Case $\mathbb F=\mathbb C$ and $\f(\bmzeta)$ is arbitrary.} 
 We restrict ourselves to the case $\Z_{\newsvec}$. Namely, we use the polynomial $h_{num}$ and the function $\f$ to construct an analytic function $g=u_{num}$ in $Rl$ variables that satisfies  \eqref{eq:gfunc}. 
 The function $w$ that satisfies \eqref{eq:wfunc} can be constructed in the same way as $g$ but
 from the polynomial $d_{num}$ and the function $\f$.
 
 First we prove the existence and analyticity of  $g$.
 From the definitions of $\Z_{\newsvec}$ and $\Z_{\br}$ it follows that if $(\bmzeta_1,\dots,\bmzeta_R)\in \Z_{\newsvec}$, then
 $(\f(\bmzeta_1),\dots,\f(\bmzeta_R))\in \Z_{\br}$.
 Hence, by case 1 above and assumption 2 in Theorem \ref{th:main}, the set $\Z_{\newsvec}$ is contained in the zero set of the function 
 \begin{equation}\label{eq:g_eq_h}
 \begin{split}
 \g(\bmzeta_1,\dots,\bmzeta_R)&=
 h_{num}\left(\f(\bmzeta_1),\dots,\f(\bmzeta_R)\right)=
 \\
 & h_{num}\left(
 \frac{ f_{1,num}(\bmzeta_1)}{ f_{1,den}(\bmzeta_1)},\dots,\frac{f_{l,num}(\bmzeta_R)}{f_{l,den}(\bmzeta_R)}
  \right).
  \end{split}
  \end{equation}
Since $h_{num}$ is a polynomial, the function $\g$ can be represented as a ratio $\g=\g_{num}/\g_{den}$, where the functions $\g_{num}$ and $\g_{den}$ are defined on the whole space $\mathbb C^{Rl}$. Since both $\g_{num}$ and $\g_{den}$ consist of the composition of some polynomials and $2Rl$    functions $f_{1,num}(\bmzeta_1),f_{1,den}(\bmzeta_1),\dots,f_{l,num}(\bmzeta_R),f_{l,den}(\bmzeta_R)$ which are analytic on $\mathbb C^{RL}$, it follows  that $\g_{num}$ and $\g_{den}$ are analytic on $\mathbb C^{RL}$ \cite[p. 6]{GunningandRossi1965}.
We set $g=\g_{num}$. It is clear that $g$ vanishes on $\Z_{\newsvec}$.

Now we prove that $g$ is not identically zero. Since $\g=g/\g_{den}$, it is sufficient to  show that $\g$ is not zero at some point.
Let $\bmzeta^0$ be a point as in assumption 3 in Theorem \ref{th:main}. Then, by the inverse function theorem,
there exists a neighborhood $\mathcal N(\bmzeta^0,\varepsilon)\subset\mathbb F^l$ of the point $\bmzeta^0$ such that for any $\pointba\in \mathcal N(\bmzeta^0,\varepsilon)$
the equation $\f(\bmzeta)=\pointba$ has the solution $\bmzeta=\f^{-1}(\pointba)$.
Hence the equation $(\f(\bmzeta_1),\dots,\f(\bmzeta_R))=(\pointba_1,\dots,\pointba_R)$ has the solution
$(\bmzeta_1,\dots,\bmzeta_R)=(\f^{-1}(\pointba_1),\dots,\f^{-1}(\pointba_R))$ for all
$(\pointba_1,\dots,\pointba_R)\in \mathcal N(\bmzeta^0,\varepsilon)\times\dots\times \mathcal N(\bmzeta^0,\varepsilon)$.
Since $\mu_{Rl}(\mathcal N(\bmzeta^0,\varepsilon)\times\dots\times \mathcal N(\bmzeta^0,\varepsilon))=\mu_l(\mathcal N(\bmzeta^0,\varepsilon))^R>0$ \cite[Theorem B, p.144]{Halmos1950} and, by step 1), $h_{num}$ is not identically zero, there exists  a point $(\pointba_1^0,\dots,\pointba_R^0)\in \mathcal N(\bmzeta^0,\varepsilon)\times\dots\times \mathcal N(\bmzeta^0,\varepsilon)$ such that $h_{num}(\pointba_1^0,\dots,\pointba_R^0)\ne 0$. Hence, by 
\eqref{eq:g_eq_h},
$
\g(\f^{-1}(\pointba_1^0),\dots,\f^{-1}(\pointba_R^0))=
h_{num}(\pointba_1^0,\dots,\pointba_R^0)\ne 0.
$
That is, \eqref{eq:gfunc} holds for $g=\g_{num}$.

3)\ {\em Case $\mathbb F=\mathbb R$.} To distinguish between the complex and the real case  we  denote $\Z_{\newsvec}$ and $W_{\newsvec}$ in case 3 by $\Z_{\newsvec,\mathbb R}$ and $W_{\newsvec,\mathbb R}$, respectively. Similarly, the sets $\Z_{\newsvec}$ and $W_{\newsvec}$ considered in case 2, i.e. for $\mathbb F=\mathbb C$, are denoted by $\Z_{\newsvec,\mathbb C}$ and $W_{\newsvec,\mathbb C}$, respectively. 
Let $g_{\mathbb C}=\g_{num}$ and $w_{\mathbb C}=w$ denote the analytic functions constructed in case 2. Then
$g_{\mathbb C}$ and $w_{\mathbb C}$ are not identically zero and
$g_{\mathbb C}$ vanishes on $\Z_{\newsvec,\mathbb C}$ and $w_{\mathbb C}$ vanishes on $W_{\newsvec,\mathbb C}$.

Since $W_{\newsvec,\mathbb R}$ is a subset of $W_{\newsvec,\mathbb C}$, it follows that 
$w_{\mathbb C}$ vanishes on $W_{\newsvec,\mathbb R}$. Thus, \eqref{eq:wfunc} holds for  $w=
w_{\mathbb C}$.

It has not been proven that set $\Z_{\newsvec,\mathbb R}$ is a subset of $\Z_{\newsvec,\mathbb C}$ but in any case
$\Z_{\newsvec,\mathbb R}=\left(\Z_{\newsvec,\mathbb R}\cap \Z_{\newsvec,\mathbb C}\right)\cup \left(\Z_{\newsvec,\mathbb R}\setminus \Z_{\newsvec,\mathbb C}\right)$. We show that \eqref{eq:gfunc} holds for $g=g_{\mathbb C}\cdot w_{\mathbb C}$. Indeed, by case 2, $g_{\mathbb C}$ and hence $g=
g_{\mathbb C}w_{\mathbb C}$ vanish on $\Z_{\newsvec,\mathbb C}\supseteq\Z_{\newsvec,\mathbb R}\cap \Z_{\newsvec,\mathbb C}$. 
On the other hand, if $[\bmzeta_1^T\ \dots\ \bmzeta_R^T]^T\in \Z_{\newsvec,\mathbb R}\setminus \Z_{\newsvec,\mathbb C}$, then
there exist $\lambda_1,\dots,\lambda_R\in\mathbb R$, $\lambda\in\mathbb C\setminus\mathbb R$ and $r\in\{1,\dots,R\}$ such that
$\lambda_1\newsvec(\bmzeta_1)+\dots+\lambda_R\newsvec(\bmzeta_R)=\lambda\newsvec(\bmzeta_r)$, yielding that 
$[\bmzeta_1^T\ \dots\ \bmzeta_R^T]^T\in W_{\newsvec,\mathbb C}$. Thus, 
$\Z_{\newsvec,\mathbb R}\setminus \Z_{\newsvec,\mathbb C}\subseteq W_{\newsvec,\mathbb C}$ and $w_{\mathbb C}$ vanishes on $\Z_{\newsvec,\mathbb R}\setminus \Z_{\newsvec,\mathbb C}$ as well. That is, \eqref{eq:gfunc} holds for $g=g_{\mathbb C}w_{\mathbb C}$.
\end{proof}
\subsection{Proof of Theorem \ref{th:main}}\label{subsec:appAC}
We show that for a generic $\z\in\Omega$ that satisfies the conditions in Theorem \ref{th:main}, 
conditions 1--2 in Theorem \ref{th:main_deterministic} are also satisfied for 
\begin{align*}
&\newM=\newM(\z),\quad S=\{\newsvec(\bmzeta):\ q_1(\f(\bmzeta))\cdots q_N(\f(\bmzeta))\ne 0\}, \text{ and}\\
 &\newsvec_1 =\newsvec (\bmzeta_1),\dots, \newsvec_R=\newsvec (\bmzeta_R),
\end{align*}
where, by our notational convention from Subsection \ref{subsectionIC}, the vectors $\bmzeta_1,\dots,\bmzeta_R$ are such that $[\bmzeta_1^T\ \dots\ \bmzeta_R^T]^T$ coincides with the last  $s=Rl$
entries of $\z\in\Omega$.
The generic uniqueness that we want to prove in Theorem \ref{th:main},
then follows from Theorem  \ref{th:main_deterministic}.
We have the following.

1)  Condition 1 of Theorem \ref{th:main_deterministic} holds for generic $\z\in\Omega$  by  assumption 1 in Theorem \ref{th:main}.

2) By Lemma \ref{lemma:generic_con_2}, condition 2 of Theorem \ref{th:main_deterministic} holds for generic $\bmzeta_1,\dots,\bmzeta_R\in\mathbb F^l$, or equivalently, for generic $[\bmzeta_1^T\ \dots\ \bmzeta_R^T]^T\in\mathbb F^s$.
Hence, condition 2 of Theorem \ref{th:main_deterministic} holds for generic $\z\in\Omega$.
(Indeed, if $\widetilde{\Omega}$ denotes a set of points $\z\in\Omega$
such that condition 2 of Theorem \ref{th:main_deterministic} does not hold
and $\pi_{s}$ denotes the projection onto the last $s$
coordinates of $\mathbb F^n$, then, by Lemma \ref{lemma:generic_con_2}, $\mu_s\{\pi_s(\widetilde{\Omega})\}=0$, which implies \cite[Theorem B, p.144]{Halmos1950} that $\mu_n\{\widetilde{\Omega}\}=0$.) 

\section{Proof of Lemma \ref{lemma:cos_sin}}\label{Appendix_B}
We  use the fact that $\cos n\zeta$ and $\frac{\sin n\zeta}{\sin\zeta}$ are polynomials in
$\cos\zeta$ \cite[p. 642]{Prudnikov_vol_I}:
\begin{align*}
\cos n\zeta =& \sum\limits_{k=0}^{\lfloor n/2\rfloor} C^{2k}_n (\cos^2 \zeta-1)^k\cos^{n-2k}\zeta=P_{n}(\cos\zeta),\\
\frac{\sin n\zeta}{\sin\zeta} =& \sum\limits_{k=0}^{\lfloor(n-1)/2\rfloor} C^{2k+1}_n (\cos^2 \zeta-1)^k\cos^{n-2k-1}\zeta,
\end{align*}
where $\lfloor x\rfloor$ denotes  the 
integer part of $x$.
Substituting \eqref{eq:cos_sine}
into these equations we obtain that there exist rational functions $R_n$ and $Q_n$ such that
\eqref{eq:thelastequation1}--\eqref{eq:thelastequation2} hold.

\section*{Acknowledgment}
The authors wish to thank Giorgio Ottaviani and  Ed Dewey for their
assistance in algebraic geometry.


\bibliographystyle{IEEEtran}

%

%

\begin{IEEEbiography}[{\includegraphics[width=1in,height=1.25in,clip,keepaspectratio]{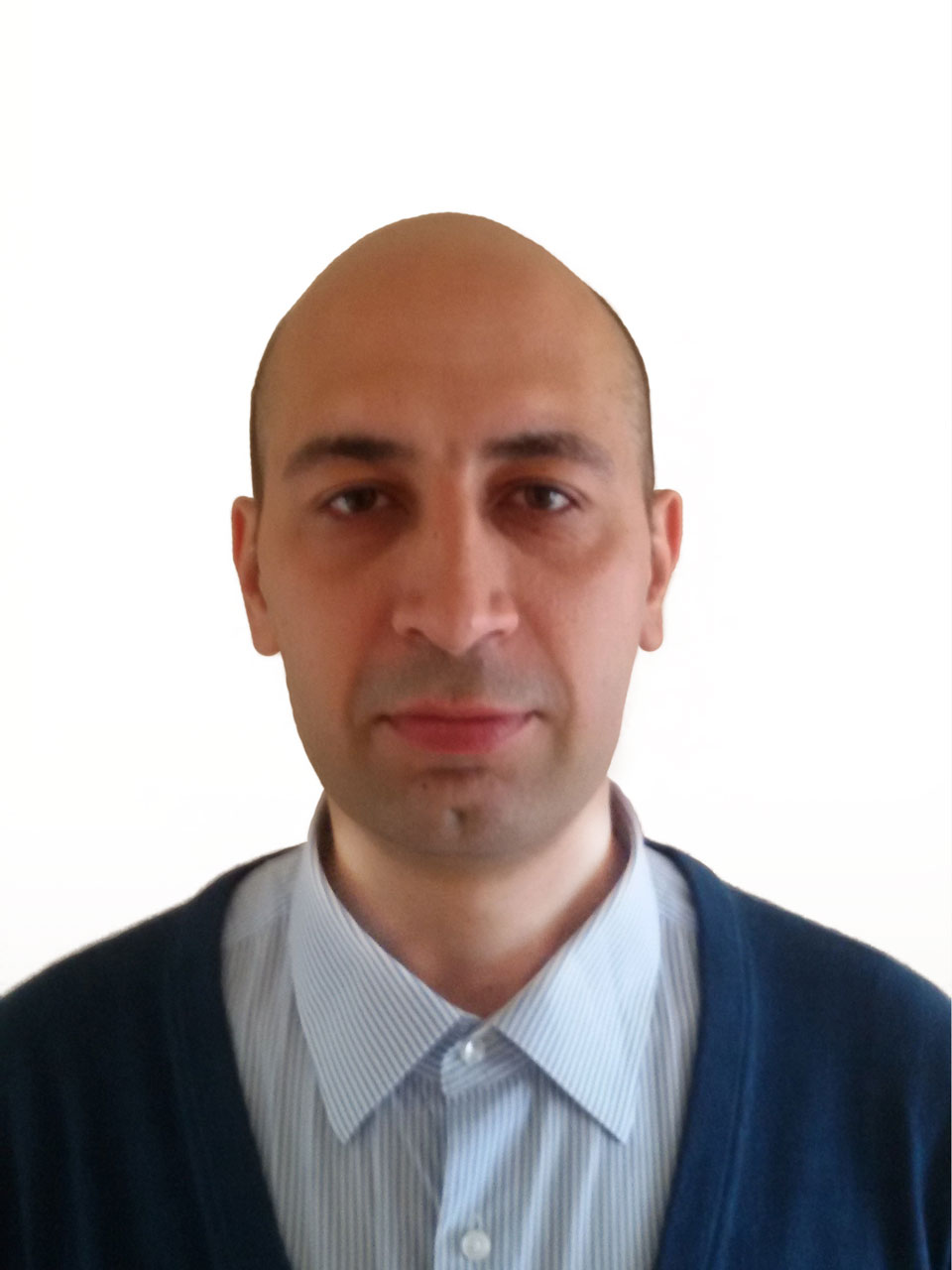}}]{Ignat Domanov}
Ignat Domanov received the Master’s degree
from Donetsk State University, Ukraine, the Ph.D degree in Physics and Mathematics
from Institute of Applied Mathematics and Mechanics, Ukraine, and
the Ph.D. degree in Engineering from KU Leuven, Belgium, 
in 1998, 2004, and 2013, respectively. Since 2013 he has been a Postdoctoral
Fellow with the KU Leuven, Belgium. His
research interests include applied linear algebra,
tensor decompositions and tensor-based
signal processing.
\end{IEEEbiography}
\begin{IEEEbiography}[{\includegraphics[width=1in,height=1.25in,clip,keepaspectratio]{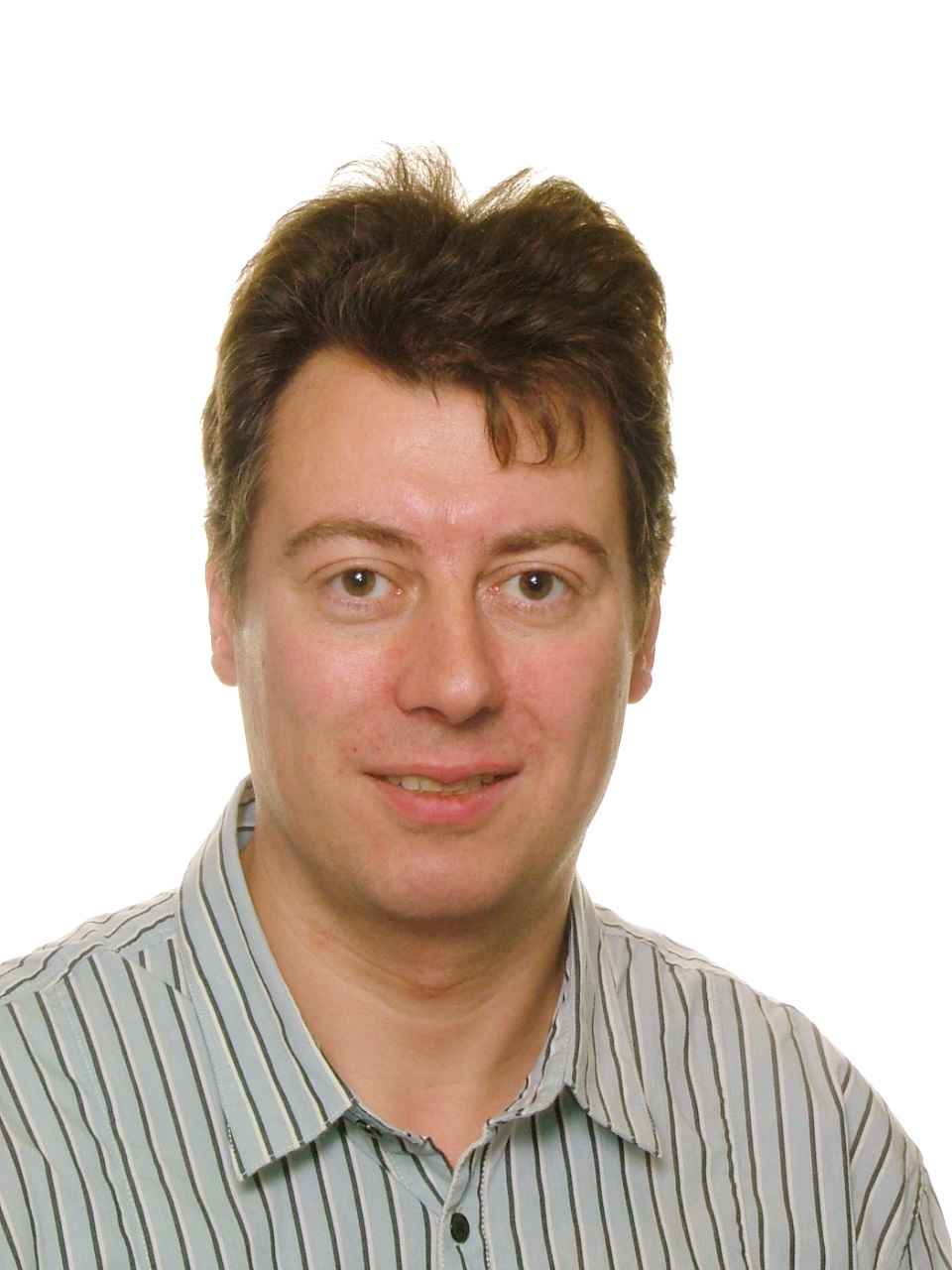}}]
{Lieven De Lathauwer}
Lieven De Lathauwer received the Master’s degree in electromechanical
engineering and the Ph.D. degree in applied sciences from
KU Leuven, Belgium, in 1992 and 1997, respectively. He is currently
Professor with KU Leuven, Belgium. Dr. De Lathauwer is an Associate
Editor of the SIAM Journal on Matrix Analysis and Applications
and has been an Associate Editor for IEEE Transactions on Signal
Processing. His research concerns the development of tensor tools for
engineering applications.
\end{IEEEbiography}






\end{document}